\documentclass[12pt]{amsart}
\usepackage{amsmath,bbm}
\usepackage{latexsym,mathrsfs,bm}
\addtolength{\textwidth}{3 truecm}
\addtolength{\textheight}{1 truecm}
\setlength{\voffset}{-0.6 truecm}
\setlength{\hoffset}{-1.3 truecm}

\usepackage{color}
\usepackage{url}
\usepackage[english]{babel}
\usepackage{paralist}
\usepackage{amsthm,amsfonts,amssymb,amsmath}
\usepackage[latin1]{inputenc}
\newcommand{\sumtwo}{\operatorname*{\sum\sum}}
\let\originalleft\left
\let\originalright\right
\newcommand{\sumthree}{\operatorname*{\sum\sum\sum}}
\newcommand{\sumfour}{\operatorname*{\sum\sum\sum\sum}}

\newcommand{\new}[1]{\textcolor{black}{#1}}

\newcommand{\eps}{\varepsilon}
\newcommand{\R}{\mathbb{R}}

\newcommand{\N}{\mathbb{N}}

\newcommand{\Z}{\mathbb{Z}}

\newcommand{\es}[1]{\begin{equation}\begin{split}#1\end{split}\end{equation}}
\newcommand{\est}[1]{\begin{equation*}\begin{split}#1\end{split}\end{equation*}}

\newcommand{\M}{\mathcal{M}}

\newcommand{\EE}{\mathcal{E}}

\newcommand{\s}{\operatorname{\mathcal{S}}}
\newcommand{\mathi}{i}
\newcommand{\mathd}{d}
\newcommand{\assign}{:=}

\renewcommand{\mod}[1]{~\pr{\textnormal{mod}~#1}}
\newtheorem*{theo*}{Theorem}
\newtheorem{conjecture}{Conjecture}

\newtheorem{theorem}{Theorem}
\newtheorem{ezer}{Exercise}

\newtheorem{prop}[ezer]{Proposition}

\newtheorem{lemma}{Lemma}

\newtheorem{corollary}{Corollary}

\newtheorem*{rem*}{Remark}
\newcommand{\pr}[1]{\left( #1\right)}

\newcommand{\pmd}[1]{\left| #1\right|}

\newcommand{\e}[1]{\operatorname{e}\pr{ #1}}

\newcommand{\sumprime}{\sideset{}{'}\sum}
\newcommand{\sumstar}{\sideset{}{^*}\sum}

\let\originalleft\left
\let\originalright\right
\renewcommand{\left}{\mathopen{}\mathclose\bgroup\originalleft}
\renewcommand{\right}{\aftergroup\egroup\originalright}

\numberwithin{equation}{section}

\begin{document}
\title[The mean square of the product of $\zeta(s)$ with Dirichlet polynomials]{The mean square of the product of the Riemann zeta function with Dirichlet Polynomials}

\date{\today}
\subjclass[2010]{11M06, 11M26}
\keywords{Riemann zeta function, Twisted second moment, Kloosterman fractions,
Bilinear sums, Lindel\"of Hypothesis}

\author[S. Bettin]{Sandro Bettin}
\address{Centre de Recherches Mathematiques Universite de Montreal P. O Box 6128 \\
Centre-Ville Station Montreal \\ Quebec H3C 3J7 
}
\email{bettin@crm.umontreal.ca}

\author[V. Chandee]{Vorrapan Chandee}
\address{Department of Mathematics \\ Burapha University \\ 169 Long-hard bangsaen rd Saensuk, Mueang, Chonburi, Thailand 20131
}
\email{vorrapan@buu.ac.th }

\author[M. Radziwi\l\l]{Maksym Radziwi\l\l}
\address{School of Mathematics \\ Institute for Advanced Study 
\\ 1 Einstein Drive \\ Princeton, NJ, 08540}
\email{maksym@ias.edu}
\curraddr{Centre de Recherches Mathematiques Universite de Montreal P. O Box 6128 \\
Centre-Ville Station Montreal \\ Quebec H3C 3J7 }
\thanks{The third author was partially supported by NSF grant DMS-1128155.}

\allowdisplaybreaks
\numberwithin{equation}{section}
\selectlanguage{english}
\begin{abstract}
Improving earlier work of Balasubramanian, Conrey and Heath-Brown
\cite{BCH}, 
we obtain an asymptotic formula for the mean-square of the
Riemann zeta-function times an arbitrary Dirichlet polynomial of length  
$T^{1/2 + \delta}$, with $\delta = 0.01515\ldots$. 
As
an application we obtain an upper bound of the correct order of magnitude
for the third moment of the Riemann zeta-function. 
We also refine 
previous work of Deshouillers and Iwaniec \cite{DI}, obtaining asymptotic
estimates in place of bounds. 
Using the work of Watt \cite{Watt}, we compute the mean-square of the Riemann
zeta-function times a Dirichlet 
polynomial of length going up to $T^{3/4}$ provided that the Dirichlet 
polynomial assumes
a special shape. Finally, we exhibit a conjectural estimate for trilinear
sums of Kloosterman fractions which implies the Lindel\"of Hypothesis.
\end{abstract}

\maketitle

\section{Introduction}

We are interested in the mean-square of the product of the Riemann
zeta-function $\zeta (s)$ with an arbitrary Dirichlet polynomial $A(s)$. More precisely,
we would like to understand
\begin{equation}\label{eqn:defofI}
 I = \int_{\mathbbm{R}} \left| \zeta \left( \tfrac{1}{2} + \mathi t \right)
   \right|^2 \cdot \left| A \left( \tfrac{1}{2} + \mathi t \right) \right|^2
   \phi \left( \frac{t}{T} \right) \mathd t 
\end{equation}
with $\phi (x)$ a smooth function supported in $[1, 2]$ and
\[ A (s) \assign \sum_{n \leqslant T^{\theta}} \frac{a_n}{n^s} ,\quad a_n \ll n^{\varepsilon}, \quad \theta < 1. \]

Asymptotic estimates for $I$
have been used consistently to understand the distribution of
values of $L$-functions, the location of their zeros, and upper and lower bounds
for the size of $L$-functions. See, for example,~\cite{CGG, Con, Rad2, S}.

It is crucially important 
to allow $\theta$ to be as large
as possible. For example, if we could take $\theta = 1 - \varepsilon$
in~\eqref{eqn:defofI} then the
Lindel\"{o}f Hypothesis would follow.

Balasubramanian, Conrey and Heath-Brown obtained an asymptotic formula for
$I$ when $\theta < \tfrac{1}{2}$. 
For $\theta < \tfrac{1}{2}$ and
$\phi (t)$ the indicator function of the interval $[1, 2]$, they show that
\begin{equation}\label{asfbch}
  I = T \sum_{d, e \leqslant T^{\theta}} \frac{a_d \overline{a_e}}{[d, e]} \cdot
  \left( \log \left( \frac{T (d, e)^2}{2 \pi de} \right) + 2 \gamma + \log 4 - 1
  \right)+o(T).
\end{equation}
When $A (s)$ is a mollifier, they show 
that one can go further and take $\theta < \tfrac 12 
+ \tfrac {1}{34} = 0.529411\ldots$ . Their motivation
was to understand the location of the zeros of the Riemann zeta-function.
Specifically they deduce that at least $38$\% of the complex zeros of $\zeta
(s)$ are on the critical line $\Re s = \tfrac{1}{2}$. Improvements on the
admissible length of $A (s)$ will lead to a further understanding of the zeros
of $\zeta (s)$ on the critical line. (See also \cite{Con}).

In complete generality the formula~\eqref{asfbch} fails when $\theta > 1$.
Balasubramanian, Conrey and Heath-Brown conjecture that it remains true
provided that $\theta < 1$. This is known as the $\theta = 1$ conjecture. An
important change occurs at $\theta = \tfrac{1}{2}$ . When $\theta <
\tfrac{1}{2}$ only the diagonal terms (in the sense of Section~\ref{sec:diagonalterm} below) contribute to $I$, while for $\theta >
\tfrac{1}{2}$ there is also a contribution from the non-diagonal terms which
seems difficult to manage given the generality of the Dirichlet polynomial $A
(s)$. The main result of our paper consists in breaking the $\tfrac{1}{2}$ barrier 
for an arbitrary Dirichlet polynomial. In fact, we prove~\eqref{asfbch} for 
$\theta < \frac{17}{33} = \tfrac{1}{2} + \delta$ with $\delta = \frac{1}{66} \approx 0.01515...$.

\begin{theorem} \label{thm:asymptotic}
  Let $I$ and $A (s)$ be as above. If $\theta < \tfrac{1}{2} + \delta$, with
  $\delta = \frac{1}{66}$ then,
  \est{ I &=  \sum_{d, e \leqslant T^{\theta}} \frac{a_d
     \overline{a_e}}{[d, e]} \cdot \int_\R \left( \log \left( \frac{t\, (d, e)^2}{2
     \pi de} \right) + 2 \gamma\right)\,{\phi} \pr{\frac tT} \mathd t + O \left( T^{\frac 3{20} + \varepsilon} N^{\frac{33}{20}} +T^{\frac13+\eps}\right) , }
where $N := T^{\theta}$. 
\end{theorem}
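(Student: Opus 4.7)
The overall strategy is to reduce $I$ to a diagonal contribution producing the stated main term, plus an off-diagonal contribution that is controlled by bilinear and trilinear averages of sums of Kloosterman fractions.

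Concretely, the plan is to open $|\zeta(\tfrac12+\mathi t)|^2$ via an approximate functional equation, writing it as an essentially symmetric sum
\[
|\zeta(\tfrac12+\mathi t)|^2 = \sum_{m,n\geq 1} \frac{1}{\sqrt{mn}}\pr{\frac{m}{n}}^{\mathi t} W\!\pr{\frac{2\pi mn}{t}} + (\tn{lower order}),
\]
where $W$ is a smooth cutoff. Substituting this together with the expansion $|A(\tfrac12+\mathi t)|^2=\sum_{d,e}a_d\overline{a_e}(d/e)^{\mathi t}/\sqrt{de}$ and interchanging the order of summation reduces $I$ to
\[
\sum_{d,e\leq N}\sum_{m,n\geq 1}\frac{a_d\overline{a_e}}{\sqrt{de\,mn}}\int_\R \phi\!\pr{\tfrac{t}{T}} W\!\pr{\tfrac{2\pi mn}{t}}\pr{\frac{dm}{en}}^{\mathi t}\mathd t
\]
plus analogous pieces. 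A stationary phase analysis shows that the $t$-integral is negligible unless $dm\approx en$. The exact-diagonal contribution $dm=en$ is standard: writing $d=gd_0$, $e=ge_0$ with $(d_0,e_0)=1$ and carrying out the resulting Mellin/residue computation yields precisely the main term in the statement.

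The off-diagonal locus $dm\neq en$ carries all the difficulty. I would apply Voronoi summation in $n$ (equivalently, the functional equation of the Estermann series twisted by $(d/e)^{\mathi t}$), which transforms the sum into one involving Kloosterman sums $S(h,\pm 1;e_0 c)$ weighted by a Bessel-type integral transform of $W\phi$. After dyadic localization in all remaining variables and opening the Kloosterman sums, the resulting expression takes the form of a trilinear sum of Kloosterman fractions of the type studied by Deshouillers--Iwaniec~\cite{DI} and Watt~\cite{Watt}, namely
\[
\sum_{c,h}\gamma_{c,h}\sum_{d_0,e_0}\alpha_{d_0}\overline{\beta_{e_0}}\,\e{\frac{h\overline{e_0}}{c}}\Psi(d_0,e_0,c,h),
\]
with a smooth $\Psi$ carrying the analytic data and $\alpha,\beta,\gamma$ encoding the Dirichlet coefficients $a_n$.

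The main obstacle, and the only source of the savings needed to break the $\tfrac12$ barrier, is the estimation of this trilinear Kloosterman average. The plan is to refine the spectral large-sieve bounds of \cite{DI} so as to produce asymptotic rather than merely upper-bound control. The essential input is the Kuznetsov trace formula, which converts the average over the moduli $c$ into a spectral sum over Maass forms, the cancellation ultimately coming from the rapid decay of the Bessel--Kuznetsov transform in the spectral parameter; the contribution from exceptional eigenvalues must be controlled separately (e.g.\ via Kim--Sarnak). Balancing these spectral savings against the length $N$ of $A$ and the trivial estimate yields the claimed error $T^{3/20+\varepsilon}N^{33/20}$, which is $o(T)$ exactly when $N\leq T^{17/33}$; the additional $T^{1/3+\varepsilon}$ absorbs the contribution of the very small moduli $c$, where no spectral method applies and one must resort to direct bounds on the remaining sums.
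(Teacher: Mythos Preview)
Your outline diverges from the paper at the decisive step, and the divergence is not cosmetic.

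First, the transformation. The paper does not apply Voronoi or the Estermann functional equation. After splitting off the diagonal $m_1n_2=m_2n_1$, it parametrizes the off-diagonal by $\Delta=m_1n_2-m_2n_1$, extracts the gcd of $n_1,n_2$, and then applies \emph{Poisson summation in the smooth variable $m_2$} (subject to the congruence $n_1m_2\equiv-\Delta\pmod{n_2}$). The dual variable $h$ produces additive characters $\e{-h\Delta\,\overline{n_1}/n_2}$, i.e.\ Kloosterman \emph{fractions}, not full Kloosterman sums $S(h,\pm1;c)$. The $h=0$ term is not an error: it combines with the diagonal to produce the stated main term (this is essential once $\theta>\tfrac12$, since the off-diagonal genuinely contributes to the main term). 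Your write-up skips this recombination entirely.

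Second, and more seriously, you misidentify the key arithmetic input. The nonzero-$h$ contribution is a trilinear form
\[
\sum_{a}\nu_a\sumtwo_{(n_1,n_2)=1}\alpha_{n_1}\beta_{n_2}\,\e{\frac{a\,\overline{n_1}}{n_2}},\qquad a=h\Delta,
\]
and the paper bounds it by Proposition~\ref{prop:DFIimproved} (the Bettin--Chandee trilinear estimate, an elementary refinement of Duke--Friedlander--Iwaniec). There is \emph{no} Kuznetsov formula, no Maass spectrum, no Kim--Sarnak here; the exponents $T^{3/20+\eps}N^{33/20}$ arise directly from the exponents $\tfrac{7}{20},\tfrac14,\tfrac38,\tfrac18$ in~\eqref{mpb}. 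The spectral machinery of Deshouillers--Iwaniec and Watt is used in the paper only for Theorems~\ref{thm:zetatimestwopoly} and~\ref{thm:zetatimesproductoftwosmooth}, where the Dirichlet polynomial factors and one can create a genuine average over moduli; for a single arbitrary $A(s)$ that structure is unavailable, and your proposed spectral route would not deliver the stated error. Finally, the $T^{1/3+\eps}$ is not a ``small moduli'' artifact: it is simply the error $O(T^{-2/3})$ in the approximate functional equation for $|\zeta(\tfrac12+\mathi t)|^2$, integrated against $\phi(t/T)$.
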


We notice that the off-diagonal terms contribute to the main term 
roughly those $d$ and $e$
for which the logarithm in the above expression is negative.

Our main tool in the proof of Theorem~\ref{thm:asymptotic} 
is an estimate for trilinear forms of
Kloosterman fractions, which will appear in~{\cite{BCR}}. 
This estimate improves a result of Duke, Friedlander, Iwaniec in
{\cite{DFI1}}, dealing with bilinear sums. The use of Theorem 2 in their paper 
is also enough to break the $\frac12$ barrier, though with 
the smaller constant $\delta = 1/190 \approx 0.00526$ in Theorem~\ref{thm:asymptotic}. 

If we assume a general estimate for trilinear forms of Kloosterman fractions, such as,
\es{\label{eqn:gentrilinear}
S_{A,M,N} &:= \sum_a  \sumtwo_{(m,n) = 1} \nu_a\alpha_m \beta_n \e{\frac{a\overline{m}}{n}} \\
&\ll_{\eps} \| \alpha\|\|\beta\|\|\nu\|(M+N)^{\frac 12 + r + \eps}A^{t} +\|\nu\|A^\frac12\pr{ \|\alpha\|_\infty\| \beta\| N^{\frac 12 + \eps}+ \| \alpha\| \|\beta\|_{\infty}M^{\frac 12 + \eps}},
} 
where $M \leq m < 2M,$ $N \leq n < 2N$, $A \leq a < 2A$,  $A \ll (NM)^{\frac{0.5-r}{1 + 2t} + \eps}$, and $\|\cdot\|$ and $\|\cdot\|_\infty$ denote the $L_2$ and $L_\infty$ norms respectively, then the statement of Theorem~\ref{thm:asymptotic} can be replaced as follows.

\begin{theorem} \label{thm:asymwithgeneraltrilinear}
Suppose that~\eqref{eqn:gentrilinear} is true for some $r,t\geq 0$. Then
\est{ I =  \sum_{d, e \leqslant T^{\theta}} \frac{a_d
     \overline{a_e}}{[d, e]} \cdot&\int_\R\left( \log \left( \frac{t \,(d, e)^2}{2
     \pi de} \right) + 2 \gamma \right) \phi \pr{\frac tT} \mathd t
      + O \left(T^{\frac 12 - t + \eps} N^{\frac 12 + r + 2t} +T^{\frac13+\eps} \right), }
for $\theta < \tfrac{1}{2} + \tfrac{0.5 - r}{1 + 2(r + 2t)}$
and where $N := T^{\theta}$. 
\end{theorem}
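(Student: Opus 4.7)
The plan is to rerun the proof of Theorem~\ref{thm:asymptotic} essentially verbatim, treating the trilinear Kloosterman bound of \cite{BCR} as a black box and substituting the hypothetical estimate \eqref{eqn:gentrilinear} in its place. Everything up to and including the reduction of the off-diagonal contribution to a Kloosterman-fraction sum proceeds unchanged: the approximate functional equation for $\zeta(s)^{2}$, the expansion of $|A(\tfrac 12+\mathi t)|^{2}$ as a double sum over $d, e \leq T^{\theta}$, the extraction of the diagonal (and near-diagonal) contribution that produces the stated main term, and the Poisson/Voronoi summation step that converts the off-diagonal piece into a trilinear sum $S_{A,M,N}$ of Kloosterman fractions---none of these depend on the specific form of the bound applied at the end.

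The only place where \eqref{eqn:gentrilinear} actually enters is the final estimation of the off-diagonal piece. After dyadic decomposition, one is faced with a sum of shape $S_{A,M,N}$ in which $M, N$ have size comparable to $T^{\theta}$ and $A$ has size $T/(MN) \asymp T^{1-2\theta}$. Since the interesting regime is $\theta > \tfrac 12$, one has $A \ll T^{\eps}\,(MN)^{(0.5-r)/(1+2t)}$ for any $r \in [0,\tfrac 12]$, so the admissibility hypothesis of \eqref{eqn:gentrilinear} is automatically met. The first term of \eqref{eqn:gentrilinear}, after bounding the $L^{2}$-norms $\|\alpha\|, \|\beta\|, \|\nu\|$ (each at most $T^{\eps}$ times its natural size) and summing over dyadic ranges for $A, M, N$, contributes an error of order $T^{1/2 - t + \eps} N^{1/2 + r + 2t}$. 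The second (bilinear, Weil-type) term in \eqref{eqn:gentrilinear} is dominated by the $T^{1/3 + \eps}$ already present in Theorem~\ref{thm:asymptotic}.

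To turn the error $T^{1/2 - t + \eps} N^{1/2 + r + 2t}$ into the stated range of $\theta$, one simply requires it to be $o(T)$, i.e.\ $\theta\pr{\tfrac 12 + r + 2t} < \tfrac 12 + t$, which rearranges exactly to $\theta < \tfrac 12 + \tfrac{0.5 - r}{1 + 2(r + 2t)}$. As a consistency check, the \cite{BCR} bound corresponds to the choice $(r,t) = (9/20,\, 7/20)$ in \eqref{eqn:gentrilinear}: substituting these values recovers both the error term $T^{3/20 + \eps} N^{33/20}$ and the admissible range $\theta < 17/33$ of Theorem~\ref{thm:asymptotic}, so the plan is internally consistent.

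The main obstacle is bookkeeping rather than new analytic input: one must verify that the weights $\alpha_{m}, \beta_{n}, \nu_{a}$ produced by the off-diagonal reduction carry $L^{2}$- and $L^{\infty}$-norms compatible with the claimed error, and that the dyadic splittings used to access \eqref{eqn:gentrilinear} do not lose more than $T^{\eps}$. Since these weights come from smooth cutoffs, Mellin transforms of the test function $\phi$, and divisor-type coefficients of modest size, they are handled by standard means, and no ingredient beyond what is already present in the proof of Theorem~\ref{thm:asymptotic} is required.
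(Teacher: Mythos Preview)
Your plan is correct and matches the paper's own proof: rerun the argument of Theorem~\ref{thm:asymptotic} verbatim and, at the single step~\eqref{eqn:hSmallbound} where Proposition~\ref{prop:DFIimproved} is invoked, substitute the hypothetical bound~\eqref{eqn:gentrilinear}. The paper does exactly this and nothing more.

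A few slips in your write-up should be fixed. First, the size of the $a$-variable is $A\asymp MN/T$ (in the paper's notation $A=N_1N_2/(d^2 T^{1-\eps})$), not $T/(MN)$; with $\theta>\tfrac12$ this is $T^{2\theta-1}$, not $T^{1-2\theta}$. Second, the admissibility condition $A\ll (MN)^{(0.5-r)/(1+2t)+\eps}$ is not ``automatic for any $r\in[0,\tfrac12]$'': it is precisely equivalent to the upper bound $\theta<\tfrac12+\tfrac{0.5-r}{1+2(r+2t)}$ that appears in the statement, and the paper verifies this explicitly. Third, the second (Weil-type) term of~\eqref{eqn:gentrilinear} does not get absorbed into the $T^{1/3+\eps}$; after inserting $\|\nu\|\asymp A^{1/2}$, $\|\alpha\|_\infty,\|\beta\|_\infty\ll T^\eps$ and the actual ranges, it contributes $N_i/d^2$ and hence $N^{1+\eps}$ after summing over $d$ and the dyadic decomposition. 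This is then absorbed by the main error term $T^{1/2-t+\eps}N^{1/2+r+2t}$ (or by $T^{1/3+\eps}$ when $N$ is small), which is why it does not appear in the final statement. None of these points changes the strategy, but they are exactly the bookkeeping you flagged as the ``main obstacle,'' so it is worth getting them right.
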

The estimate of Duke, Friedlander, Iwaniec implies~\eqref{eqn:gentrilinear} with
$r = \tfrac{23}{48}$ and $t = \tfrac 12$, while
the estimate of Bettin and Chandee allows us to take 
$r = \tfrac {9}{20}$ and $t = \tfrac{7}{20}$.
We conjecture that~\eqref{eqn:gentrilinear} holds true for all $r,t\geq0$. 
\begin{conjecture}\label{mconj} Let $A\ll (NM)^{\frac12+\eps}$. Then
\begin{multline} \label{trilinear} 
S_{A,M,N}  \ll \| \alpha\|\|\beta\|\|\nu\|(M+N)^{\frac 12 + \eps} +\|\nu\|A^\frac12\pr{ \|\alpha\|_\infty\| \beta\| N^{\frac 12 + \eps}+ \| \alpha\| \|\beta\|_{\infty}M^{\frac 12 + \eps}}.
\end{multline}
\end{conjecture}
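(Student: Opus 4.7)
The plan is to push the standard Cauchy--Schwarz plus Kuznetsov trace formula template to the natural barrier $A \ll (MN)^{1/2+\eps}$. After applying Cauchy--Schwarz in the $a$ variable to remove the coefficients $\nu_a$, we have
\est{
|S_{A,M,N}|^2 \le \|\nu\|^2 \sum_{a} \Psi\pr{\tfrac aA} \biggl| \sumtwo_{(m,n)=1} \alpha_m\beta_n \e{\frac{a\overline m}{n}}\biggr|^2,
}
where $\Psi$ is a smooth majorant of the characteristic function of $[A,2A]$. Opening the square produces a quadruple sum over $(m_1,m_2,n_1,n_2)$ and an inner sum over $a$ of the phases $e(a(\overline{m_1}/n_1-\overline{m_2}/n_2))$. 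Writing the argument as a single fraction with denominator $n_1n_2/(n_1,n_2)$ and completing the $a$-sum by Poisson summation splits the analysis into a diagonal contribution, where the dual frequency vanishes, and an off-diagonal remainder.

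The diagonal should yield the balanced main term $\|\alpha\|^2\|\beta\|^2(M+N)^{1+\eps}$ after some bookkeeping over coprime pairs. The off-diagonal involves Kloosterman sums to moduli of size $\sim N^2/(n_1,n_2)$, or $\sim M^2$ after a symmetric treatment of the roles of $m$ and $n$. I would next apply the Kuznetsov formula to these Kloosterman sums, linearising each into a spectral average over Maass forms, holomorphic cusp forms and Eisenstein series for $\Gamma_0(q)$, and invoke the Deshouillers--Iwaniec spectral large sieve with the Fourier coefficients paired against $\alpha$ and $\beta$. In the extremal regimes where one of $\alpha$ or $\beta$ is essentially concentrated on a single point, the Weil bound applied pointwise should recover the two unbalanced terms $\|\nu\|A^{1/2}\|\alpha\|_\infty\|\beta\| N^{1/2+\eps}$ and its transpose.

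The essential obstacle is the critical size $A \asymp (MN)^{1/2}$, at which the dual variable produced by Poisson summation lies precisely at the edge of the spectral window: the square-root cancellation available in the Kloosterman sums must exactly absorb the length lost when dualising the $a$ variable, with no slack on either side. The currently available large sieve inequalities carry an inherent $(MN)^r$ loss coming from the continuous spectrum and from the lack of cancellation near the bottom of the discrete spectrum, which is precisely the source of the positive exponents $r,t$ appearing in~\eqref{eqn:gentrilinear}. Since Conjecture~\ref{mconj} implies the Lindel\"of Hypothesis for $\zeta(s)$, it cannot be expected to fall to any combination of the existing spectral inputs alone: a successful proof would seemingly require either a fundamentally sharper large sieve inequality for Kloosterman fractions with no loss on the continuous spectrum, or an altogether different organisation of the sum, perhaps via $\mathrm{GL}_3$ Voronoi summation applied to $\zeta(s)^3$. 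For this reason we do not attempt a proof here.
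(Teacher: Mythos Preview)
Your assessment is correct: the statement is presented in the paper as a \emph{conjecture}, not a theorem, and the paper gives no proof of it. The paper's treatment matches yours almost exactly---it observes (Corollary~\ref{cor:lindelof}) that Conjecture~\ref{mconj} implies the Lindel\"of Hypothesis, and remarks that it appears strictly stronger than Lindel\"of, so no proof with existing inputs is to be expected. Your heuristic sketch of why the Cauchy--Schwarz/Kuznetsov/spectral large sieve template stalls at the critical range $A\asymp (MN)^{1/2}$ is a reasonable explanation of the obstruction, though the paper itself does not go into this level of detail.

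The one piece of content the paper does supply, which you do not mention, is the Appendix: there the authors prove the \emph{lower bound}
\[
\max_{\alpha,\beta,\nu}|S_{A,M,N}|\gg (AMN)^{\frac12-\eps}(M+N)^{\frac12}+A(M+N)^{1-\eps},
\]
showing that the conjectured upper bound is sharp up to $\eps$-powers. This is established by two explicit constructions: a smooth choice of $\alpha$ together with $\beta,\nu$ supported on primes in fixed residue classes (to produce the unbalanced term via Ramanujan sums), and a second-versus-fourth-moment argument averaged over Dirichlet characters (to produce the balanced term). If you wanted to round out your discussion, that optimality result is the natural companion to the conjecture.
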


This conjecture essentially states that we expect square-root cancellation in the shortest two sums, as long as the total saving does not exceed $M$ or $N$. In the Appendix we show that this is best possible, up to $\varepsilon$-powers.  

Using the estimate~\eqref{trilinear} and Theorem~\ref{thm:asymwithgeneraltrilinear}, we obtain an asymptotic formula for $I$ valid for any $\theta < 1$, and this implies the Lindel\"{o}f hypothesis. We state this as a corollary below.
\begin{corollary}\label{cor:lindelof}
Suppose that Conjecture~\ref{mconj} holds. Then the Lindel\"of Hypothesis is true.
\end{corollary}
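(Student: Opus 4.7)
The plan is to combine Theorem~\ref{thm:asymwithgeneraltrilinear}, specialized to the parameters allowed by Conjecture~\ref{mconj}, with a resonator argument and an iterative control of the smoothness of $\zeta$ on the critical strip.

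First, Conjecture~\ref{mconj} is precisely~\eqref{eqn:gentrilinear} with $r=t=0$. Plugging these values into Theorem~\ref{thm:asymwithgeneraltrilinear}, the admissible range becomes $\theta<\tfrac12+\tfrac12=1$ and the error term is $O(T^{1/2+\eps}N^{1/2}+T^{1/3+\eps})$, which is $o(T)$ for any $N=T^{\theta}$ with $\theta<1$. Thus the asymptotic formula for $I$ holds for arbitrary Dirichlet polynomials of length $T^{\theta}$ with $\theta<1$ and coefficients $a_n\ll n^{\eps}$.

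Fix $t_0$ with $\phi(t_0/T)\gg 1$ and let $V:=|\zeta(\tfrac12+it_0)|$. Consider the resonator $A(s):=\sum_{n\le N} n^{it_0} n^{-s}$ with $N=T^{1-\eps}$, so that $A(\tfrac12+it_0)=\sum_{n\le N} n^{-1/2}\sim 2\sqrt N$. The trivial bound $|A'(\tfrac12+it)|\le\sum_{n\le N}(\log n)/\sqrt n\ll\sqrt N\log N$ implies $|A(\tfrac12+it)|\ge\sqrt N$ on the interval $|t-t_0|\le c/\log T$. Meanwhile, after the change of variables $g=(d,e),\,d=gd',\,e=ge'$ with $(d',e')=1$, the main term of the asymptotic for $I$ with this choice of $A$ is bounded absolutely by
\[ T\sum_{g\le N}\frac{1}{g}\sum_{d',e'\le N/g,\,(d',e')=1}\frac{1}{d'e'}\bigl|\log(T/(2\pi d'e'))+2\gamma\bigr|\ll T(\log T)^{O(1)}. \]

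Choose $\delta=\min(c/\log T,\,V/M)$, where $M:=\sup_{t\in[T,2T]}|\zeta'(\tfrac12+it)|$. On $|t-t_0|\le\delta$ one has $|\zeta A|\ge(V/2)\sqrt N$, so $\int|\zeta A|^2\phi(t/T)\,dt\gg V^2 N\delta$. Comparing with the upper bound $T(\log T)^{O(1)}$ yields $V^2\delta\ll T^{\eps}(\log T)^{O(1)}$. If the smoothness of $A$ is the binding constraint then $V\ll T^{\eps/2}(\log T)^{O(1)}$ directly; otherwise $\delta=V/M$ and we obtain the recursion $V^3\ll T^{\eps} M(\log T)^{O(1)}$.

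The main obstacle is controlling $M$, since Cauchy's integral formula on a disk of radius $1/\log T$ gives $M\ll(\log T)\sup_{|w-(\tfrac12+it)|\le 1/\log T}|\zeta(w)|$, and this supremum is controlled via Phragm\'{e}n--Lindel\"{o}f interpolation between $|\zeta(1+it)|\ll\log t$ and the very bound $|\zeta(\tfrac12+it)|\ll t^{\eta}$ one is trying to prove. One resolves this by iteration: starting from the convexity exponent $\eta_0=\tfrac14$, the recursion $\eta_{k+1}=(\eta_k+\eps)/3$ (coming from $V^3\ll T^{\eta_k+\eps}$) converges geometrically to the fixed point $\eta_\infty=\eps/2$. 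Since $\eps>0$ was arbitrary, this establishes $\zeta(\tfrac12+it)\ll_{\eps}t^{\eps}$ for every $\eps>0$, which is the Lindel\"{o}f hypothesis.
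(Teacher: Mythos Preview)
Your first paragraph is exactly the paper's argument: Conjecture~\ref{mconj} is~\eqref{eqn:gentrilinear} with $r=t=0$, and Theorem~\ref{thm:asymwithgeneraltrilinear} then yields the asymptotic for every $\theta<1$. The paper stops there, invoking as folklore the implication ``asymptotic for all $\theta<1$ $\Rightarrow$ Lindel\"of''; you have actually supplied a proof of that implication, so you are filling in a step the paper leaves to the reader.

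Your resonator--iteration argument is correct. The one place a reader might hesitate is the Phragm\'en--Lindel\"of/functional-equation step controlling $M$, but this is standard: from $|\zeta(\tfrac12+it)|\ll t^{\eta}$ and $|\zeta(1+\eps+it)|\ll 1$ one gets $|\zeta(\sigma+it)|\ll t^{\eta+\eps}$ for $\sigma\in[\tfrac12,\tfrac12+1/\log T]$, and the functional equation then covers $\sigma\in[\tfrac12-1/\log T,\tfrac12]$ at the cost of a bounded factor $t^{1/\log T}$. Cauchy on radius $1/\log T$ gives $M\ll T^{\eta+\eps}$, and your recursion $\eta_{k+1}=(\eta_k+\eps)/3$ converges as claimed. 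The case split on $\delta$ and the main-term bound $\ll T(\log T)^{O(1)}$ are also fine.

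That said, the iteration is avoidable. Using subharmonicity of $|\zeta A|^2$ on a disk of radius $r=c/\log T$ centred at $\tfrac12+it_0$ gives
\[
V^2\,|A(\tfrac12+it_0)|^2 \le \frac{1}{\pi r^2}\int_{\frac12-r}^{\frac12+r}\int_{t_0-r}^{t_0+r}|\zeta(\sigma+it)A(\sigma+it)|^2\,dt\,d\sigma.
\]
On this disk $|A(\sigma+it)|\gg\sqrt N$ (your derivative bound, plus $n^{|\sigma-1/2|}\le e$), so the left side is $\gg V^2N$. For the right side one needs $\int_{\R}|\zeta(\sigma+it)A(\sigma+it)|^2\phi(t/T)\,dt\ll T(\log T)^{O(1)}$ uniformly for $|\sigma-\tfrac12|\le 1/\log T$; this follows from the $\sigma=\tfrac12$ case by Gabriel's convexity exactly as in the paper's Lemma~\ref{lem:from12tosigma}. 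One then gets $V^2N\ll T(\log T)^{O(1)}$ directly, hence $V\ll T^{\eps}$ in a single step. This is closer in spirit to the machinery the paper already has in place for Corollary~\ref{cor:thirdMoment}, though your approach has the virtue of using only Theorem~\ref{thm:asymwithgeneraltrilinear} exactly as stated.
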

Conjecture \ref{mconj} appears to be strictly stronger than 
the Lindel\"of Hypothesis. Indeed Conjecture 1 implies (\ref{asfbch}) with $\theta < 1$, 
while the Lindel\"of Hypothesis only
gives the cruder bound
$$
I \ll T^{1 + \varepsilon} \sum_{n \leq T^{\theta}} \frac{|a_n|^2}{n}.
$$

The proof of Theorem~\ref{thm:asymwithgeneraltrilinear}, on which Corollary \ref{cor:lindelof} depends, 
is the same as that of Theorem~\ref{thm:asymptotic} except that we use~\eqref{eqn:gentrilinear} instead of Proposition~\ref{prop:DFIimproved}. The modification will be discussed at the end of Section~\ref{sec:Thm1}. 

Duke, Friedlander and Iwaniec apply their estimate to obtain bounds for the
twisted second moment of a Dirichlet $L$-function \cite{DFI2}. They show that,
\[ \sum_{\chi \text{ mod } q} \left| L \left( \tfrac{1}{2}, \chi \right)
   \right|^2 \cdot \left| D \left( \tfrac{1}{2}, \chi \right) \right|^2 \ll
   q^{1 + \varepsilon} \]
for Dirichlet polynomials $D (s, \chi)$ with coefficients $a_n \ll
n^{\varepsilon}$ and of length $q^{1 / 2 + \delta'}$ with some $\delta' > 0$.
Our proof of Theorem~\ref{thm:asymptotic} would not extend to give an asymptotic
formula in this case, and additional input 
is needed.

As an application of Theorem~\ref{thm:asymptotic} we obtain an upper bound of the correct order
of magnitude for the third moment of the Riemann zeta-function.

\begin{corollary}\label{cor:thirdMoment}
We have,
\[ \int_T^{2 T} \left| \zeta \left( \tfrac{1}{2} + \mathi t \right) \right|^3
   \mathd t \ll T (\log T)^{9 / 4} . \]
\end{corollary}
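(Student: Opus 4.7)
The plan is to bound $\int_T^{2T} |\zeta(\tfrac12+it)|^3\,dt$ by applying Theorem~\ref{thm:asymptotic} to a Dirichlet polynomial $A$ that mollifies $\zeta(s)^{1/2}$, together with a Cauchy--Schwarz argument to control the discrepancy between $|\zeta|^3$ and $|\zeta A|^2$. I would take $A(s) = \sum_{n \leq T^\theta} b_n\,\phi(n/T^\theta)/n^s$ for some $\theta$ just below $\tfrac12 + \tfrac{1}{66}$, where $\phi$ is a smooth cutoff and $b_n$ is the multiplicative function with Dirichlet series $\zeta(s)^{1/2}$ (so $b_p = 1/2$, $b_{p^2} = -1/8$, etc.). Heuristically $|A(\tfrac12+it)|^2 \approx |\zeta(\tfrac12+it)|$ on average, and so $\int |\zeta A|^2\,dt$ ought to be of the desired order $T(\log T)^{9/4}$.

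Applying Theorem~\ref{thm:asymptotic} and evaluating the main term by standard Selberg--Delange analysis of the multiplicative coefficients $b_n$ (whose Dirichlet series has a $(s-1)^{-1/2}$ singularity, yielding a pole-like contribution of order $5/4$ in $\log T^\theta$, multiplied by the external $\log T$), one obtains $\int |\zeta A|^2\,dt \sim c\,\theta^{5/4}\,T(\log T)^{9/4}$. Then, using the triangle inequality $|\zeta| \leq |A|^2 + |\zeta - A^2|$, I would write
\[
\int |\zeta|^3\, dt \leq \int |\zeta A|^2\, dt + \int |\zeta|^2\,|\zeta - A^2|\, dt,
\]
and bound the remainder by Cauchy--Schwarz combined with Ingham's fourth-moment asymptotic $\int|\zeta|^4\,dt \ll T(\log T)^4$. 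This reduces the task to showing $\int|\zeta - A^2|^2\,dt \ll T(\log T)^{1/2}$.

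This last estimate is the main obstacle. Since $2\theta > 1$, the approximate functional equation gives $\zeta(\tfrac12+it) = \sum_{n \leq T^{2\theta}} n^{-1/2-it} + O(T^{-\theta})$, so $\zeta - A^2$ is essentially a Dirichlet polynomial $D(s) = \sum_{T^\theta < n \leq T^{2\theta}} (1 - c_n)/n^s$ with $c_n = \sum_{ab = n,\,a,b \leq T^\theta} b_a b_b$. The length $T^{2\theta}$ exceeds $T$, so Montgomery--Vaughan alone produces an error of size $T^{2\theta}$, which is unacceptable. The key is the factored representation $D(s) = 2\sum_{T^\theta < a \leq T^{2\theta}} (b_a/a^s)\, B_{T^{2\theta}/a}(s)$, where $B_z$ denotes the truncation of $\zeta^{1/2}$ of length $z \leq T^\theta$; the diagonal contribution to $\int|D|^2\,dt$ evaluates via an integral of the shape $\int_\theta^{2\theta} u^{-3/4}(2\theta-u)^{1/4}\,du$ to $\sim c\,T(\log T)^{1/2}$, while the off-diagonal terms are controlled by Hilbert's inequality (or, alternatively, by applying Theorem~\ref{thm:asymptotic} to the factors $B_{T^{2\theta}/a}$, which individually have length $\leq T^\theta$). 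Combining the estimates yields $\int_T^{2T}|\zeta|^3\,dt \ll T(\log T)^{9/4}$.
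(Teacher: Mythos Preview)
Your argument has a genuine gap at the estimate $\int_T^{2T}|\zeta - A^2|^2\,dt \ll T(\log T)^{1/2}$. The difference $D(s)=\sum_{T^\theta<n\leq T^{2\theta}}(1-c_n)n^{-s}$ has length $T^{2\theta}>T$, and neither Hilbert's inequality nor your factored representation yields the bound you need. Concretely: for every prime $p\in(T^\theta,T^{2\theta}]$ the only factorisations $1\cdot p$ and $p\cdot 1$ are excluded by the cutoff, so $c_p=0$ and $d_p=1$. Hence
\[
\sum_{T^\theta<n\leq T^{2\theta}}|d_n|^2\;\geq\;\pi(T^{2\theta})-\pi(T^\theta)\;\gg\;\frac{T^{2\theta}}{\log T}\;\gg\;T^{1+2\delta-o(1)},
\]
and the Montgomery--Vaughan off-diagonal error in $\int_T^{2T}|D|^2\,dt$ is already a power of $T$ larger than $T(\log T)^{1/2}$. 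The factored form $D(s)=2\sum_a b_a a^{-s}B_{T^{2\theta}/a}(s)$ does not resolve this: after expanding $|D|^2$ and integrating, the true diagonal is $a_1m_1=a_2m_2$, which cuts across the $a$-decomposition, and the genuine off-diagonal still carries the same weight. Your suggestion of ``applying Theorem~\ref{thm:asymptotic} to the factors $B_{T^{2\theta}/a}$'' does not fit either, since Theorem~\ref{thm:asymptotic} concerns $\int|\zeta|^2|A|^2\,dt$, not $\int|D|^2\,dt$.

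The paper circumvents this difficulty by an entirely different mechanism. It works at $\sigma=\tfrac12+A/\log T$ and, via a smooth contour-shift identity (Lemma~\ref{lemma:G}) combined with the exact relation $\sum_{ef=n,\,e,f\leq x}d_{1/2}(e)d_{1/2}(f)=1$ for $n\leq x$, produces a \emph{pointwise} inequality
\[
|\zeta(s)|\;\leq\;\log x\int_{\R}\Bigl|\sum_{n\leq x}\frac{d_{1/2}(n)}{n^{s+iv}}\Bigr|^2\bigl|\widehat G(\tfrac{v\log x}{2\pi})\bigr|\,dv\;+\;\frac{C_N}{(\delta\log x)^N}\int_{\R}\frac{|\zeta(\tfrac12+it+iv)|\,dv}{((\sigma-\tfrac12)^2+v^2)^{(N+1)/2}}.
\]
After multiplying by $|\zeta(s)|^2$ and integrating over $t$, the first term is handled by Theorem~\ref{thm:asymptotic} (this is Lemma~\ref{lem:from12tosigma}); the second, via H\"older and Heath-Brown's convexity lemma, becomes a \emph{small constant times} $M_3(\sigma,T)$, and for $A$ large enough is absorbed into the left-hand side. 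One then returns to $\sigma=\tfrac12$ by convexity. The key point is that no mean square of a polynomial of length exceeding $T$ is ever required; the self-referential absorption replaces your $\int|\zeta-A^2|^2$ estimate entirely.
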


We further indicate in Section~\ref{sec:overnMoment} how to refine this result to obtain correct
upper bounds for the $2k$-th moment, when $k$ has the form $k = 1 + 1 / n$. Previously Corollary~\ref{cor:thirdMoment}
was known only on the assumption of the Riemann Hypothesis \cite{HB}. The only sharp unconditional upper bounds that were previously known are for the classic cases $k = 0,1,2$ and for $k = 1 / n$, due to Heath-Brown \cite{HB}.

With further applications in mind we investigate how much $\theta$ can be
increased when the Dirichlet polynomial $A (s)$ is specialized.

\subsection{Products of two Dirichlet polynomials.}

When $A (s)$ can be written as a product of two Dirichlet polynomials $B (s) C
(s)$, one can appeal to stronger estimates for sums of Kloosterman sums due to Deshouillers
and Iwaniec.
In {\cite{DI}}, Deshouillers and Iwaniec consider the product
of $\zeta (s)$ with two Dirichlet polynomials,
\begin{equation}\label{eqn:defofJ}
 J = \int_{\mathbbm{R}} \left| \zeta \left( \tfrac{1}{2} + \mathi t \right)
   \right|^2 \cdot \left| A \left( \tfrac{1}{2} + \mathi t \right) \right|^2
   \cdot \left| B \left( \tfrac{1}{2} + \mathi t \right) \right|^2 \mathd t
\end{equation}
with 
\es{\label{afr}
A(s)\assign \sum_{n \leqslant N} 
\frac{\alpha_n}{n^s} ,\quad B (s) \assign \sum_{k \leqslant K} 
\frac{\beta_k}{k^s},\quad \text{where  }  \alpha_n\ll n^\eps,\,\beta_k \ll k^{\varepsilon}.
}
They show that
if $N \geq K$, then
\[ J \ll T^{\varepsilon} \cdot (T + T^{1 / 2} N^{3 / 4} K + T^{1 / 2} NK^{1 /
   2} + N^{7 / 4} K^{3 / 2}) . \]
Their proof depends on estimates for incomplete Kloosterman sums as developed
in {\cite{DI}}. Proceeding similarly as in the proof of
Theorem~\ref{thm:asymptotic}, and using Deshouillers and Iwaniec's estimate,
we refine their 
bound to an asymptotic estimate.

\begin{theorem} \label{thm:zetatimestwopoly}
  Let $J, A (s)$ and $B (s)$ be as defined in~\eqref{eqn:defofJ} and~\eqref{afr}, and let $N \geqslant K$.
  Then,
\est{
    J & =  \sum_{d, e \leqslant NK} \frac{a_d \overline{a_e
    }}{[d, e]} \cdot\int_\R \left( \log \left( \frac{t (d, e)^2}{2 \pi de} \right)
    + 2 \gamma \right)\phi\pr{\frac tT}\mathd t \\
    &\quad + O (T^{\varepsilon} \cdot (T^{1 / 2} N^{3 / 4} K + T^{1 / 2} NK^{1
    / 2} + N^{7 / 4} K^{3 / 2})),
}
with $a_d \assign \sum_{nk = d} \alpha_n \beta_k$.
\end{theorem}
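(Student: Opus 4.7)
The plan is to proceed along the same lines as the proof of Theorem~\ref{thm:asymptotic}, with the essential modification that when summing over the moduli of the Kloosterman sums that arise in the off-diagonal analysis, we appeal to the Deshouillers--Iwaniec estimates for sums of Kloosterman sums from~\cite{DI}, in place of the trilinear Kloosterman-form estimate of~\cite{BCR}.

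First, I would combine $A(s)B(s)$ into a single Dirichlet polynomial $\sum_{d\leq NK}a_d d^{-s}$ with $a_d=\sum_{nk=d}\alpha_n\beta_k$, so that $J$ becomes an instance of $I$ with a convolved coefficient sequence of length $NK$. Inserting an approximate functional equation for $|\zeta(\tfrac12+it)|^2$ in~\eqref{eqn:defofJ} and opening $|A|^2|B|^2$ reduces $J$ to an average of shifted divisor sums. As in Section~\ref{sec:diagonalterm}, this splits into a diagonal part and an off-diagonal part; the diagonal contribution, applied to the convolution $a_d$, produces the stated main term after the usual contour-shift/residue calculation.

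For the off-diagonal, after the standard Voronoi/Poisson step one arrives at an expression of the shape
\est{
\sum_{n,m}\alpha_n\overline{\alpha_m}\sum_{k,\ell}\beta_k\overline{\beta_\ell}\sum_{c}S(nk,\pm m\ell;c)\,F(c,\ldots),
}
with a smooth weight $F$ capturing the archimedean data. Rather than collapsing the coefficients (which is what would force us to use a trilinear Kloosterman-fraction estimate), we keep the factorized structure: holding $n,m$ outside and summing over $k,\ell$ together with the modulus $c$ leaves precisely a sum of Kloosterman sums in the shape treated by the results of~\cite{DI}. Feeding their bound into the outer sum over $n,m$ gives the claimed error $T^{\varepsilon}(T^{1/2}N^{3/4}K+T^{1/2}NK^{1/2}+N^{7/4}K^{3/2})$.

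The main obstacle is not the application of the Deshouillers--Iwaniec bound itself, but the fact that in~\cite{DI} only the upper bound for $J$ is recorded, while here we need a genuine asymptotic. Consequently the diagonal main term has to be isolated with precision; the ``almost-diagonal'' contribution (corresponding to small shifts, equivalently small moduli $c$) must be absorbed into the main term by a residue argument entirely analogous to the one carried out for Theorem~\ref{thm:asymptotic}, rather than estimated trivially. Once this separation is effected cleanly, the remaining off-diagonal is truly off-diagonal in the Deshouillers--Iwaniec sense, and their estimate applies with room to spare.
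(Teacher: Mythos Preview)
Your high-level plan is right: the paper does exactly what you say in the first two paragraphs --- run the proof of Theorem~\ref{thm:asymptotic} verbatim (diagonal $+$ $h=0$ term giving the main term, large $h$ negligible), and only in the range $0<|h|<H_d$ replace the trilinear estimate by a Deshouillers--Iwaniec input. The ``obstacle'' you worry about at the end is a non-issue precisely because everything up to Case~3 is copied from Theorem~\ref{thm:asymptotic} without change.

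Where your proposal goes wrong is the structure you write down for the off-diagonal. After Poisson over $m_2$ one does \emph{not} get complete Kloosterman sums $S(nk,\pm m\ell;c)$ over an independent modulus $c$; one gets Kloosterman \emph{fractions}
\[
\e{-\frac{h\Delta\,\overline{n_1}}{n_2}},
\]
where $n_1,n_2$ are the Dirichlet-polynomial indices themselves (so $a_{dn_1}\overline{a_{dn_2}}$ sits in front) and $h,\Delta$ are the Poisson/shift variables. Proposition~\ref{lem:DI} is a bound for exactly this kind of object, not for sums of $S(\cdot,\cdot;c)$. Consequently your scheme ``hold $n,m$ outside and sum over $k,\ell,c$'' does not match the actual situation: there is no free modulus $c$, and the modulus $n_2$ carries both an $\alpha$-factor and a $\beta$-factor.

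The step you are missing is the factorisation the paper performs on $n_1$ (not on both indices). One writes $a_{dn_1}=\alpha_{\mu j}\beta_{\nu r}$ with $\mu\mid d^\infty$, $(j,d)=1$, $n_1=\varrho r j$, $\nu=d/(\mu,d)$, $\varrho=\mu/(\mu,d)$, so that in the fraction $\e{-h\Delta\,\overline{\varrho r j}/n_2}$ the inverse splits into a long piece $j$ (length $\leq N/\mu$) and a short piece $r$ (length $\leq K/\nu$). Now Proposition~\ref{lem:DI} is applied with $\ell=n_2$, $j=j$, $u=h\Delta$, $v=r$; the $(\ell,j)$-sums sit outside under absolute values while the $(u,v)$-sum carries arbitrary coefficients. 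It is this asymmetric factorisation of $n_1$, exploiting $N\geq K$, that produces the three terms $T^{1/2}N^{3/4}K$, $T^{1/2}NK^{1/2}$, $N^{7/4}K^{3/2}$. Rewrite your Case~3 in these terms and the proof goes through.
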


When the length of $N$ and $K$ is chosen suitably, Theorem~\ref{thm:zetatimestwopoly} allows
us to take $\theta < \tfrac{1}{2} + \frac{1}{10}$. 

\subsection{Specializing one of the Dirichlet polynomials}

A specific case of
interest is $A(s)B(s)$
with  $A(s)$ of length  $N = \sqrt{T}$ and smooth coefficients, and $B(s)$ arbitrary and as long
as possible. One can think of such estimates as estimates for 
the twisted fourth moment of the Riemann zeta-function. 
In this case we can go further by combining the trilinear sums estimate used to prove Theorem~\ref{thm:asymptotic} with Watt's strengthening~\cite{Watt} of the groundbreaking work of Deshouillers-Iwaniec on estimates for sums of Kloosterman sums {\cite{DI2}}. 

\begin{theorem}\label{thm:zetatimesproductoftwosmooth}
  Let $J, A (s)$ and $B (s)$ be as defined in~\eqref{eqn:defofJ} and~\eqref{afr}. Let $N \ll T^{\frac12+\eps}$
  for all $\eps>0$ and assume that $\alpha_n = \psi (n)$ with $\psi (x)$ a smooth
  function such that $\psi^{(j)} (x) \ll_j x^{-
  j}$ for all $j > 0$. Let $K\ll T^{\frac14}$ and $\beta_k \ll k^{\varepsilon}$ for all $\eps>0$. Moreover assume $\alpha_n$ is supported on $[N T^{-\xi_1}, 2N]$ and $\beta_k$ is supported on $[K T^{-\xi_2}, 2K]$, where $0\leq \xi_1\leq \frac15,$ $0\leq \xi_2\leq \frac1{16}$.  Then,
\est{    J & =  \sum_{d, e \leqslant NK} \frac{a_d 
      \overline{a_e
    }}{[d, e]} \cdot \int_\R \left( \log \left( \frac{t (d, e)^2}{2 \pi de} \right)
    + 2 \gamma \right)\phi\pr{\frac tT} \mathd t  +{}\\
    &\quad  + O \left( T^{\tfrac{1}{2} + \varepsilon } K^2 + KN^\frac34T^{\frac 38 + \eps} + \ T^{\frac{39}{40}+ \frac18\xi_1 + \frac{2}{5}\xi_2+ \eps}\right) ,
}
  where $a_d = \sum_{nk = d} \alpha_n \beta_k$.
\end{theorem}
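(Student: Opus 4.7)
The proof will follow the same general strategy as that of Theorem~\ref{thm:asymptotic} and of Theorem~\ref{thm:zetatimestwopoly}. First, an approximate functional equation is used to express $|\zeta(\tfrac12+it)|^2$ as two smooth Dirichlet-like sums of length roughly $T/(2\pi)$, and after multiplying by $|A(\tfrac12+it)|^2|B(\tfrac12+it)|^2$ and expanding, the integrand becomes a multiple sum whose $t$-integral can be carried out by Mellin/stationary-phase analysis. The diagonal contribution, where the variables balance appropriately, yields exactly the stated main term $\sum_{d,e\leq NK}\frac{a_d\overline{a_e}}{[d,e]}\int_\R(\log\tfrac{t(d,e)^2}{2\pi de}+2\gamma)\phi(t/T)\,dt$ with $a_d=\sum_{nk=d}\alpha_n\beta_k$, exactly as in the proof of~\eqref{asfbch}.

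The bulk of the work is in bounding the off-diagonal contribution. After Poisson summation in (say) one of the $A$-variables, the off-diagonal reduces to averages of Kloosterman fractions of the form $\e{h\overline{s}/r}$, with the remaining variables coming from $A,\overline A,B,\overline B$ and from the Poisson dual. The key new input, relative to the proof of Theorem~\ref{thm:zetatimestwopoly}, is that the smoothness of $\alpha_n=\psi(n)$ combined with the length $N\ll T^{1/2+\eps}$ makes the product $|\zeta(\tfrac12+it)|^2|A(\tfrac12+it)|^2$ effectively behave like a twisted fourth moment, so that Watt's strengthening~\cite{Watt} of the sums of Kloosterman sums estimates of Deshouillers--Iwaniec~\cite{DI2} becomes applicable---provided that the twist $B$ has length $K\ll T^{1/4}$, as assumed. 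This step will contribute the error terms $T^{1/2+\eps}K^2$ and $KN^{3/4}T^{3/8+\eps}$.

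The remaining portion of the off-diagonal, where Watt's estimate degrades, is to be handled by the trilinear Kloosterman sums estimate (Proposition~\ref{prop:DFIimproved}) already used in the proof of Theorem~\ref{thm:asymptotic}. After a suitable dyadic decomposition, one isolates a trilinear sum $S_{A,M,N}$ whose coefficients inherit the shapes of $\alpha_n\overline{\alpha_m}$ and $\beta_k\overline{\beta_l}$; the support conditions $\alpha_n$ on $[NT^{-\xi_1},2N]$ and $\beta_k$ on $[KT^{-\xi_2},2K]$ with $\xi_1\leq \tfrac15$ and $\xi_2\leq \tfrac1{16}$ are chosen precisely to ensure that the parameters $(A,M,N)$ fall within the range of validity of the trilinear estimate. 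Optimizing then gives the third error term $T^{39/40+\xi_1/8+2\xi_2/5+\eps}$.

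The main obstacle is the bookkeeping required to combine the two Kloosterman estimates: each is strongest in a different range of the modulus and of the dual variable, so the off-diagonal must be split dyadically and treated piece by piece, with the transition between the Watt and trilinear regimes chosen to balance the three error contributions. One must also verify that the smoothness of $\alpha_n$ is preserved through each of the Poisson/Voronoi manipulations, in order that, when the trilinear bound is invoked, the input coefficients still satisfy the $L^\infty$- and $L^2$-norm bounds appearing in~\eqref{eqn:gentrilinear}. Once this split is correctly arranged, the remaining estimation is routine and yields the stated error.
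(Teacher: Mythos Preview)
Your high-level outline is broadly correct---approximate functional equation, diagonal gives the main term, off-diagonal via Poisson summation leading to Kloosterman fractions, and then a combination of Watt's estimate with the trilinear bound of Proposition~\ref{prop:DFIimproved}. However, there are two concrete inaccuracies that would prevent the argument from going through as you describe it.

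First, the Poisson summation is applied to one of the \emph{zeta} variables ($m_2$, in the notation of Section~\ref{sec:Thm1}), not to one of the $A$-variables; this is exactly as in the proof of Theorem~\ref{thm:asymptotic}, and the resulting fractions are $\e{-h\Delta\overline{b_1}/b_2}$ with $b_i$ the Dirichlet-polynomial indices and $h$ the Poisson dual. Second, and more importantly, the split between the Watt regime and the trilinear regime is \emph{not} according to the size of the modulus or of the dual variable. After extracting the common divisor $d=(b_1',b_2')$ (so that the effective polynomial indices $b_i$ have size $\asymp N_iK_i/d$), the paper splits according to whether $d$ is above or below the threshold $Y=(N_1K_1N_2K_2)^{1/2}/T^{1/2+\eta}$. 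For $d\geq Y$ the effective polynomial has length $\leq T^{1/2+\eta}$, so the trilinear bound~\eqref{eqn:hSmallbound} already used in Theorem~\ref{thm:asymptotic} applies and produces the $T^{39/40+\ldots}$ term. For $d\leq Y$ one invokes Proposition~\ref{Watt}; the hypotheses~\eqref{cond} of that proposition are precisely what force $d$ to be small, and their verification (specifically the condition $(RS)^2\geq SP(RSPV)^\eps/V$) is where the constraint on $\xi_2$ enters. Thus the support restrictions $\xi_1\leq\tfrac15$, $\xi_2\leq\tfrac{1}{16}$ are used to make Watt's hypotheses hold and to keep the resulting error terms $o(T)$, not---as you wrote---to put the parameters into the range of the trilinear estimate, which has no such restriction. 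The final choice $\eta=\tfrac{\xi_2}{6}+3\eps$ balances the two regimes.
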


\begin{rem*}
Theorem~\ref{thm:zetatimesproductoftwosmooth} yields an asymptotic formula for $5\xi_1 + 16\xi_2<1$ (and $N\ll T^\frac12$, $K\ll T^{\frac12-\eps}$). We remark that this range could be enlarged with a little more work.
\end{rem*}

We notice that Theorem~\ref{thm:zetatimesproductoftwosmooth} allows us to take $\theta < \tfrac{3}{4}$ for
Dirichlet polynomials of the form $A (s) B (s)$ with $A (s)$ pretending to be
$\zeta (s)$ and $B (s)$ of length up to $T^{1 / 4 - \varepsilon}$. Thus, following the work of Radziwi\l\l \ \cite{Rad}, Theorem~\ref{thm:zetatimesproductoftwosmooth} could be applied to give a sharp upper bound for the $2k$-th moment of the Riemann zeta function for $2k<5$, conditionally on the Riemann hypothesis (however, we remark that this has been recently proven for all $k\geq0$ by Harper~\cite{Harper}). It would be interesting to investigate if Theorem~\ref{thm:zetatimesproductoftwosmooth} has other applications, for example to the study of large gaps between the zeros of the Riemann
zeta-function (see \cite{Bredberg}). 

Theorem~\ref{thm:zetatimesproductoftwosmooth} refines upon Watt's result, who uses his Kloosterman sum estimate to give (essentially) an upper bound of the form $J\ll T^{1+\eps} + T^{1/2 + \varepsilon} K^2$, for $a_n,b_n$ supported on dyadic intervals. Theorem~\ref{thm:zetatimesproductoftwosmooth} should also be compared with the asymptotic formula for the twisted fourth moment of Hughes and Young \cite{HY}. Their result allows to get an asymptotic formula for the second moment of $\zeta^2 (s) B(s)$ with $B(s)$ of length up to $T^{1 / 11 - \varepsilon}$.

\section*{Acknowledgments}

We are very grateful to Brian Conrey for suggesting to us the problem of breaking the $\tfrac12$ barrier in Theorem~\ref{thm:asymptotic} and to Micah B. Milinovich and Nathan Ng for pointing out the paper of Duke, Friedlander, Iwaniec~{\cite{DFI1}}.
We also wish to thank the referee for a very careful reading of the paper and for indicating several inaccuracies and mistakes. 

\section{Estimates for sums of Kloosterman sums  }
\begin{rem*}
Throughout the paper, we use the common convention in analytic number theory that $\eps$ denotes an arbitrarily small positive quantity that may vary from line to line.
\end{rem*}

In this section, we collect the estimates for sums of Kloosterman sums that will be used to prove the theorems. 

The following Proposition is from \cite{BCR}, and we will use it when dealing with the contribution of the off-diagonal terms in Theorem~\ref{thm:asymptotic} and~\ref{thm:zetatimesproductoftwosmooth}. 

\begin{prop}\label{prop:DFIimproved}
Let $\alpha_m$, $\beta_n$, $\nu_a$ be complex numbers, where $M \leq m < 2M,$ $N \leq n < 2N$, and $A \leq a < 2A$.  Then for any $\eps > 0$, we have
\es{\label{mpb}
\sum_a  \sumtwo_{(m,n) = 1} \nu_a\alpha_m \beta_n \e{\frac{a\overline{m}}{n}}&\ll_\eps \|\alpha\| \|\beta\|\|\nu\| \Big(1+\frac{A}{MN}\Big)^\frac{1}{2}\\
&\quad\times\hspace{-0.25em}\pr{(AMN)^{\frac7{20}+\eps}(M+N)^{\frac14}+(AMN)^{\frac38+\eps}(AN+AM)^\frac18},
}
where $\|\cdot\|$ denotes the $L_2$ norm.
\end{prop}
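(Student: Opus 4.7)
The plan is to reduce the trilinear sum to a spectral estimate for sums of Kloosterman sums via the standard Cauchy--Schwarz $\to$ reciprocity $\to$ Poisson pipeline, and then invoke the Kuznetsov trace formula to extract more cancellation from the resulting Kloosterman sums than Weil's pointwise bound allows. The extra averaging over $a$ (absent in the bilinear setting of Duke--Friedlander--Iwaniec) is what creates the room to push the exponent below $23/48$.

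First I would apply Cauchy--Schwarz to remove $\alpha_m$, giving
$$|S_{A,M,N}|^2 \leq \|\alpha\|^2 \sum_m w(m)\, \Big|\sum_a \sum_{(m,n)=1} \nu_a \beta_n \, e\!\left(\tfrac{a\overline{m}}{n}\right)\Big|^2,$$
with $w$ a smooth majorant of the characteristic function of $[M,2M]$. Opening the square produces a sum over $a_1,a_2,n_1,n_2,m$ with phase $e(a_1\overline{m}/n_1 - a_2\overline{m}/n_2)$, whose diagonal locus $a_1 n_2 = a_2 n_1$ accounts for the factor $(1+A/(MN))^{1/2}$ in the final bound. For the off-diagonal contribution, reciprocity $\overline{m}/n + \overline{n}/m \equiv 1/(mn)$ modulo $1$ rewrites the phase as a function of $m$ modulo $n_1 n_2$, times a smooth oscillatory correction of size $O(A/(MN))$; Poisson summation in $m$ then replaces the $m$-sum of length $M$ by a dual sum of length $\sim n_1 n_2/M$, in which the residual $m$-dependence is absorbed into Kloosterman sums of modulus essentially $n_1 n_2$.

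The heart of the proof is bounding the remaining multi-variable sum of Kloosterman sums, and it is here that the improvement over Duke--Friedlander--Iwaniec is harvested. I would appeal to the Kuznetsov trace formula together with the spectral large sieve for Fourier coefficients of Maass forms, treating the sum over $a_1,a_2$ together with the Poisson frequency as the bilinear coefficients entering the large sieve. The main obstacle is the optimization: one has to choose how to pair up the variables inside the large sieve, whether to apply a further Cauchy--Schwarz to the $h$- or $a$-variable, and at what point to switch from the spectral bound to a Weil-based treatment of small moduli. The two competing terms inside the parentheses of~\eqref{mpb}, with exponents $(\tfrac{7}{20},\tfrac14)$ and $(\tfrac38,\tfrac18)$, correspond to two different optimal regimes of this trade-off --- the first dominant when $M+N$ is large compared to $A$, the second when the $a$-variable is comparatively long --- and taking their sum accommodates both regimes simultaneously.
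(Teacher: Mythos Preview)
The paper does not actually prove this proposition: it is quoted from~\cite{BCR} (Bettin--Chandee, \emph{Trilinear forms with Kloosterman fractions}) and used as a black box. So there is no ``paper's own proof'' to compare against here; the comparison has to be with the argument in~\cite{BCR}.

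That argument is entirely elementary and does \emph{not} use the Kuznetsov formula or any spectral input. It follows the Duke--Friedlander--Iwaniec amplification scheme (Cauchy--Schwarz to eliminate $\alpha$, reciprocity, Poisson in $m$), but instead of closing with the Weil bound as DFI do, one observes that the output of one amplification pass is itself a sum of the same trilinear shape, with the new $a$-variable coming from the Poisson dual frequency. Feeding this back into the DFI bilinear estimate (or iterating once more and then using Weil) is what squeezes out the extra saving and produces the two exponents $\tfrac{7}{20}$ and $\tfrac{3}{8}$. The trilinear averaging over $a$ is exploited precisely because it survives the amplification step and can be cashed in at the second pass.

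Your proposal diverges at the final stage by appealing to Kuznetsov and the spectral large sieve rather than this elementary iteration. As a programme this is not unreasonable, but the sketch glosses over a genuine obstruction: after Cauchy--Schwarz and Poisson the Kloosterman sums that appear have modulus essentially $n_1 n_2$ (or $[n_1,n_2]$), and these moduli carry the arbitrary weights $\beta_{n_1}\overline{\beta_{n_2}}$. The Deshouillers--Iwaniec spectral large sieve requires a free (or at least level-structured) average over the modulus; with rough coefficients attached to $n_1,n_2$ one cannot run Kuznetsov directly. You would need a further Cauchy--Schwarz to strip off $\beta$ before any spectral step, and at that point it is not clear the bookkeeping still yields the stated exponents --- this is exactly the place where~\cite{BCR} instead recycles the DFI bound. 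If you intend a spectral proof, the missing ingredient is a precise explanation of how the $n$-variables are disentangled from the Kloosterman modulus before Kuznetsov is invoked.
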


The off-diagonal terms in Theorem~\ref{thm:zetatimestwopoly} will be estimated using the following bound, due to Deshouillers and  Iwaniec~\cite{DI}.

\begin{prop}[Deshouillers, Iwaniec] \label{lem:DI}
Let $L, J, U, V \geq 1$ and $|c(u, v)| \leq 1.$ We then have
\est{&\sum_{1 \leq \ell \leq L} \sum_{\substack{1 \leq j \leq J \\ (\ell, \varrho j) = 1}} \left| \sum_{1 \leq u \leq U} \sum_{\substack{1 \leq v \leq V \\ (v, \ell) = 1}}  c(u, v) \e{u\frac{\overline{\varrho v j} }{\ell}}\right|  \\
&\ll (LJUV)^{1/2 + \eps} \left\{ (LJ)^{1/2} + (U + V)^{1/4}[LJ(U + \varrho V)(L + \varrho V^2) + \varrho UV^2 J^2]^{1/4}\right\}.}
\end{prop}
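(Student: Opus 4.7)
The plan is to follow the strategy Deshouillers and Iwaniec introduced in \cite{DI}, reducing bilinear forms of Kloosterman fractions to the spectral theory of automorphic forms on $\Gamma_0(q)\backslash \mathbb{H}$.

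First, I would apply Cauchy--Schwarz in the outer variables $(\ell, j)$ to dispose of the absolute values, introducing a smooth majorant of the characteristic function of $[1,L]\times[1,J]$ so that the summation can be extended freely. Opening the resulting square yields a smoothed sum of the form
\[
\sum_{u_1, u_2}\sum_{v_1, v_2} c(u_1,v_1)\overline{c(u_2,v_2)} \sum_{\ell, j} \e{\frac{u_1 \overline{\varrho v_1 j} - u_2 \overline{\varrho v_2 j}}{\ell}},
\]
with the coprimality conditions $(\ell, \varrho j)=(v_i,\ell)=1$ still in force. The diagonal contribution $u_1 v_2 = u_2 v_1$ can be estimated directly by Cauchy--Schwarz on the coefficient sequence and, after taking square roots at the end, accounts for the first term $(LJ)^{1/2}$ in the claimed bound.

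For the off-diagonal terms, I would apply the reciprocity relation
\[
\frac{\overline{\varrho v_i j}}{\ell} \equiv -\frac{\overline{\ell}}{\varrho v_i j} + \frac{1}{\ell\,\varrho v_i j} \pmod{1}
\]
in each of the two exponentials, transferring the modulus from $\ell$ to $\varrho v_i j$. After swapping the order of summation, the inner sum over $\ell$ becomes incomplete and I would complete it via Poisson summation, turning the expression into a sum of classical Kloosterman sums $S(m,n;q)$ with $q$ of size roughly $\varrho v_1 v_2 j$, weighted by the Fourier transform of the smoothing factor.

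The crucial step, and the main obstacle, is to feed this sum of Kloosterman sums into Kuznetsov's trace formula for $\Gamma_0(\varrho)$ with a carefully chosen test function: it must majorize the Bessel transform coming from the previous step while decaying sharply enough on the spectral side that the Deshouillers--Iwaniec spectral large sieve inequalities deliver sharp bounds for the Fourier coefficients of Maass forms, holomorphic cusp forms and Eisenstein series. Applying these large sieve inequalities to the sequence $(c(u,v))$ in each of the three parts of the spectrum, one obtains a contribution of order
\[
(U+V)^{1/4}\bigl[LJ(U+\varrho V)(L+\varrho V^2)+\varrho UV^2 J^2\bigr]^{1/4},
\]
the factor $(U+V)^{1/4}$ arising from an interpolation between the two variables and the bracketed quantity reflecting the ranges that enter each term of the large sieve. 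Combining this with the diagonal contribution and taking the square root from the initial Cauchy--Schwarz gives the claimed estimate; the main bookkeeping difficulty is tracking the explicit dependence on $\varrho$ through reciprocity and the spectral summation.
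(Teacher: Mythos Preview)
The paper does not prove this proposition at all: it is quoted verbatim as a result of Deshouillers and Iwaniec \cite{DI} and used as a black box, so there is no ``paper's own proof'' to compare against.

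Your sketch is a fair high-level summary of the method behind the original result---Cauchy--Schwarz in the outer variables, reciprocity to flip the modulus, Poisson completion to produce genuine Kloosterman sums, and then the Kuznetsov formula together with the spectral large sieve of \cite{DI2}. As written, however, it is an outline rather than a proof: the choice of test function on the spectral side, the treatment of the exceptional spectrum, and the precise bookkeeping that produces the exponent combination $LJ(U+\varrho V)(L+\varrho V^2)+\varrho UV^2 J^2$ are all nontrivial and are not supplied here. For the purposes of the present paper the correct course is simply to cite \cite{DI}, as the authors do.
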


Finally, to estimate the off-diagonal terms in Theorem~\ref{thm:zetatimesproductoftwosmooth}, we will use the following Proposition, which can be derived easily from Proposition 4.1 of Watt~\cite{Watt}.
\begin{prop}[Watt]\label{Watt}
Let $H,C,P,V,R,S\geq1$ and $\delta\leq1$. Assume that for some $\eps>0$ we have 
\es{\label{cond}
X&:=\pr{\frac {RVSP}{HC}}^\frac12\gg (RSPV)^{\eps},\\
(RS)^2&\geq\max\left(H^2C,\frac{SP}{V}(RSPV)^{\eps}\right).
}
Moreover, assume that $\alpha(x), \beta(x)$ are complex valued smooth functions, supported on the intervals $[1/2,H]$ and $[1/2,C]$ respectively, such that
\est{
\alpha^{(j)}(x),\beta^{(j)}(x)\ll_j (\delta x)^{-j}
}
for all $j\geq0$. Assume $a_r,b_s$ are sequences of complex numbers supported on $[R/2,R]$, $[S/2,S]$ respectively and are such that $a_r\ll r^\eps$, $b_s\ll s^\eps$. Finally, assume that for all $i,j\geq 0$,
\est{
\frac{d^{i+j}}{dy^idy^j}\gamma_{r,s}(x,y)\ll_{i,j} x^{-i}y^{-j},
}
where for all $r$ and $s$, $\gamma_{r,s}(x,y)$ is supported on $[V/2,V]\times[P/2,P]$.
Then 
\es{\label{oosi}
&\sumtwo_{s\sim S,\ r\sim R}\sumfour_{\substack{h,c,p,v,\\(rv,sp)=1}}\alpha(h)\beta(c)\gamma_{r,s}(v,p)a_{r}b_{s}\e{\pm\frac{hc\overline {rv}}{sp}}\\
&\quad\ll \delta^{-\frac 72} HC  R(V+SX)\pr{1+\frac{HC}{RS}}^\frac12\pr{1+\frac{P}{VR}  }^\frac12\pr{1+\frac{H^2CPX^2}{VS^3R^4}}^\frac14 (HCRVPS)^{20\eps}.
}
\end{prop}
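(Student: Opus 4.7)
The plan is to reduce the stated bound directly to Proposition 4.1 of Watt~\cite{Watt}. First, I would identify the variables in our sextuple sum with those in Watt's formulation: $h,c$ play the role of the two outer smoothed sums of sizes $H$ and $C$; $r,s$ are the Kloosterman moduli of sizes $R,S$ carrying the arithmetic coefficients $a_r, b_s$; and $v,p$ are the inner smoothed variables of sizes $V,P$ with combined smooth weight $\gamma_{r,s}(v,p)$. The exponential $\e{\pm hc\overline{rv}/sp}$ is exactly the Kloosterman fraction that Watt treats, and the $\pm$ sign is absorbed into his formulation.

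The second step is to handle the $\delta$-dependence in the smoothness of $\alpha$ and $\beta$. Applying Mellin inversion, one writes
\est{
\alpha(h) = \frac{1}{2\pi \mathi}\int_{(2)}\tilde\alpha(u)\,h^{-u}\,du,\qquad \beta(c) = \frac{1}{2\pi \mathi}\int_{(2)}\tilde\beta(w)\,c^{-w}\,dw.
}
Integration by parts combined with the bounds $\alpha^{(j)}(x),\beta^{(j)}(x)\ll_j(\delta x)^{-j}$ shows that $\tilde\alpha(u),\tilde\beta(w)$ decay faster than any polynomial once $|\Im u|,|\Im w|\gg \delta^{-1}(HCRVPS)^\eps$. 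Truncating each contour at this height, inserting into the sextuple sum, and applying Watt's Proposition 4.1 inside the resulting double integral then yields the $\delta^{-7/2}$ factor in the final bound, with the exponent reflecting both the lengths of the Mellin contours and the polynomial dependence on auxiliary parameters in Watt's estimate.

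Finally, to invoke Proposition 4.1 of~\cite{Watt} itself, I would check that the hypotheses~\eqref{cond} on $X$ and on $(RS)^2$ are exactly the conditions under which Watt's application of the Deshouillers--Iwaniec spectral large sieve produces a nontrivial bound, and that the coprimality condition $(rv,sp)=1$ matches his assumption (or, if stated slightly differently, is reduced to $(v,sp)=(r,sp)=1$ by a short Möbius inversion along divisors common to $r$ and $sp$, absorbed into the $(HCRVPS)^{20\eps}$ factor since $a_r\ll r^\eps$). The main obstacle, such as it is, is pure bookkeeping: one must confirm that the three ``regime correction'' factors $(1+HC/RS)^{1/2}$, $(1+P/(VR))^{1/2}$ and $(1+H^2CPX^2/(VS^3R^4))^{1/4}$ in the claimed bound are precisely those produced by Watt's estimate in the relevant range; where they do not match cleanly one splits each variable dyadically and handles each subregion according to whether the relevant ratio is $\ll 1$ or $\gg 1$.
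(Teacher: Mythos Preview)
Your proposal misidentifies the input to Watt's Proposition~4.1. That proposition is a bound for sums of \emph{Kloosterman sums} $S(m,n;c)$, not for the Kloosterman-fraction exponential $\e{\pm hc\overline{rv}/sp}$ appearing in \eqref{oosi}. You cannot feed the six-fold sum directly into Watt; an intermediate transformation is required.

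The paper's proof supplies exactly this missing step: Poisson summation in the $v$ variable converts
\[
\sum_{(v,sp)=1}\gamma_{r,s}(v,p)\,\e{\pm\frac{hc\overline{rv}}{sp}}
\;=\;\sum_{\ell}S(hc\overline r,\mp\ell,sp)\int_{\R}\gamma_{r,s}(ysp,p)\,\e{\ell y}\,dy,
\]
and it is to the resulting sum over Kloosterman sums that Watt's Proposition~4.1 applies. Along the way one must (i) treat the $\ell=0$ term separately via the Ramanujan-sum bound, yielding the contribution $RVHC(HCP)^\eps$ absorbed into the $V$ part of $R(V+SX)$; (ii) truncate $\ell$ at $SP/V$ times an $\eps$-power using integration by parts, which is where the second condition in~\eqref{cond} enters; and (iii) dyadically localise $\ell\sim L$ before applying Watt. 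Your proposal omits all three steps. The $\delta^{-7/2}$ factor, incidentally, is not produced by any Mellin truncation on your side but is already present in Watt's bound for the Kloosterman-sum expression, so the Mellin-inversion manoeuvre you describe is both unnecessary and unlikely to recover the correct exponent.
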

\begin{proof}
Firstly using smooth partitions of unity, we can assume that $\alpha(x), \beta(x)$ are supported on $[H/2,H]$ and $[C/2,C]$, since the bound~\eqref{oosi} is weaker (and the conditions~\eqref{cond} stricter) for larger values of $H$ and $C$. Moreover, by dividing by $R^\eps$ and $S^\eps$ if necessary, we can assume $a_r,b_s\ll 1$.

By Poisson's formula,
\est{
\sum_{(v,sp)=1}\gamma(v,p)\e{\pm\frac{hc\overline {rv}}{sp}}&=\sumstar_{u\mod {sp}}\e{\pm\frac{hc\overline {ru}}{sp}}\sum_{v\equiv u\mod {sp}}\gamma(v,p)\\
&=\sumstar_{u\mod {sp}}\e{\pm\frac{hc\overline {ru}}{sp}}\frac1{sp}\sum_{\ell}\e{-\frac{\ell u}{sp}}\int_{\R}\gamma(y,p)\e{\frac {\ell y}{sp}}\,dy\\
&=\sum_{\ell}S(hc\overline r,\mp\ell ,sp)\int_{\R}\gamma(ysp,p)\e{\ell y}\,dy.
}
If $\ell=0$, the Kloosterman sum reduces to a Ramanujan sum, and one has $S\pr{hc\overline {r},\mp \ell ,sp}\ll (hc,sp)$. Thus, the contribution to~\eqref{oosi} coming from the terms $\ell=0$ is bounded by
\est{
\sumtwo_{s\sim S,\ r\sim R}\sumthree_{\substack{h,c,p \\(r,sp)=1}}\alpha(h)\beta(c)a_{r}b_{s}\frac{(hc,sp)}{sp}V\ll R VHC (HCP)^{\eps}.
}
Also, integrating by parts repeatedly, we see that the terms with $\ell \geq\frac{ SP}{ V}(RSPV)^{\eps}$ give a negligible contribution. For the remaining terms, we introduce a smooth partition of unity 
\est{
1=\sumprime_{L}\theta_L(x),\qquad \forall x\geq1,
}
where $\theta_L(x)$ is supported in $[L/2, 3L]$ (with $L\ll \frac{ SP}{ V}(RSPV)^{\eps}$), satisfies $\theta_L(x)^{j}\ll_j L^{-j}$ for all $j\geq0$, and is such that $\sum'_{L\leq X}1\ll \log (2+X)$ for all $X\geq1$. Thus, we need to bound
\est{
&\sumtwo_{s\sim S,\ r\sim R}\sumthree_{\substack{h,c,p,\\(r,sp)=1}}\sum_{0< |\ell|<\frac{SP}{V}(RSPV)^{\eps}}\alpha(h)\beta(c)a_{r}b_{s}S(hc\overline r,\mp\ell ,sp)\int_{\R}\gamma(ysp,p)\e{\ell y}\,dy\\
&=\sumprime_{L}\int_{y\sim \frac{V}{SP}} \sumtwo_{s\sim S,r\sim R}\sumthree_{\substack{h,c,p,\\(r,sp)=1}}\sum_{\ell\sim L}\alpha(h)\beta(c)\omega(\ell,y)a_{r}b_{s}S(hc\overline r,\mp\ell ,sp)f_{s}(p,y)\,dy,\\
}
where $f_{s}(p,y):=\gamma(ysp,p)$, $\omega(\ell, y)=\theta(\ell)\e{\ell y}$, and $\sum^\prime$ denotes the sum over the partitions of unity. We remark that for $y\sim \frac{V}{SP}$, we have $\frac{d^j}{dp^j}f_{s}(p,y)\ll  p^{-j}$, and that $\frac{d^j}{d\ell^j}\omega(\ell, y)\ll ( L^{-1}+\frac{V}{SP})^{j}\ll L^{-j}(RSPV)^{j\eps}$. By Proposition 4.1 of Watt~\cite{Watt}, the sums inside the integral are bounded by
\est{
\delta^{-\frac72}(RVLP)^{\frac72\eps}HCL(RSX)^{1+\eps}\pr{1+\frac{HC}{RS}}^\frac12\pr{1+\frac{L}{RS}}^\frac12\pr{1+\frac{H^2CLX^2}{(RS)^4}}^\frac14,
}
and summing over $L$ and integrating over $y$ completes the proof of the proposition.
\end{proof}

\section{The proof of Theorem~\ref{thm:asymptotic}} \label{sec:Thm1}

We start by expressing $\pmd{\zeta\pr{\frac12+it}}^2$  as a sum of length approximately $T^{1+\eps}$. Let $G(w)$ be an entire function with rapid decay along vertical lines, that is $G(x + iy) \ll y^{-A}$ for any fixed $x$ and $A > 0.$ Suppose $G(-w) = G(w), G(0) =1, G(1/2) = 0$. 
We will use the following form of the approximate functional equation for
$|\zeta(s)|^2$. 

\begin{lemma}[Approximate functional equation] \label{lem:fncofZetasq}For $T < t < 2T,$ we have
\est{
\pmd{\zeta\pr{\frac12+it}}^2= 2 \sum_{m_1, m_2}\frac{1}{(m_1m_2)^{\frac12}}\pr{\frac{m_1}{m_2}}^{it}W\pr{\frac{2\pi m_1m_2}t} + O\left(T^{-2/3} \right),
}
where
\est{
W(x):= \frac{1}{2\pi i} \int_{(2)}x^{-w}G(w)\frac{dw}w,
}
and where we use the notation $\int_{(c)}$ to mean an integration up the vertical line from $c-i\infty$ to $c+i\infty$.

\end{lemma}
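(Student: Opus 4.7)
The plan is to study the Mellin--Barnes integral
\est{
\Phi(t) \assign \frac{1}{2\pi i} \int_{(2)} \zeta\pr{\tfrac12+it+w}\zeta\pr{\tfrac12-it+w} \pr{\frac{t}{2\pi}}^{w} G(w) \frac{dw}{w},
}
and to prove $\pmd{\zeta(\tfrac12+it)}^2 = 2\Phi(t) + O(T^{-2/3})$. Opening out each zeta as an absolutely convergent Dirichlet series on $\Re w = 2$ and swapping sum with integral gives at once
\est{
\Phi(t) = \sum_{m_1,m_2\geq1} \frac{(m_2/m_1)^{it}}{(m_1m_2)^{1/2}}\, W\pr{\frac{2\pi m_1 m_2}{t}},
}
which, after relabeling $m_1\leftrightarrow m_2$, is precisely half of the right-hand side of the stated formula.

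To relate $\Phi(t)$ to $\pmd{\zeta(\tfrac12+it)}^2$ I would move the line of integration from $\Re w=2$ to $\Re w=-2$. In this strip the integrand has three simple poles: at $w=0$ (from $1/w$), with residue $\pmd{\zeta(\tfrac12+it)}^2 G(0) = \pmd{\zeta(\tfrac12+it)}^2$; and at $w=\tfrac12\mp it$, coming from the poles of $\zeta(\tfrac12\pm it+w)$ at $s=1$. Each of the latter residues carries a factor $G(\tfrac12\mp it)$, which by the rapid vertical decay of $G$ is $O(T^{-A})$ for every $A>0$, so both are negligible. Calling $\tilde\Phi(t)$ the residual integral on $\Re w=-2$, I would then perform the substitution $w\mapsto -w$ (using $G(-w)=G(w)$ and the reversal of orientation, which together produce a single overall sign) and apply the functional equation $\zeta(s)=\chi(s)\zeta(1-s)$ to both zeta factors, arriving at
\est{
\tilde\Phi(t) = -\frac{1}{2\pi i}\int_{(2)} \chi\pr{\tfrac12+it-w}\chi\pr{\tfrac12-it-w}\,\zeta\pr{\tfrac12+it+w}\zeta\pr{\tfrac12-it+w}\pr{\frac{t}{2\pi}}^{-w}G(w)\frac{dw}{w}.
}

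The decisive step is Stirling's asymptotic for $\chi$: uniformly for $|\Im w|\ll t^{1-\eps}$ one has $\chi(\tfrac12+it-w)\chi(\tfrac12-it-w)=(t/2\pi)^{2w}(1+O((1+|w|^2)/t))$. The leading term combines with the $(t/2\pi)^{-w}$ factor to reproduce the integrand of $\Phi(t)$ with the opposite sign, so $\tilde\Phi(t)=-\Phi(t)+E(t)$, where $E(t)$ is the integral against the Stirling remainder. Putting everything together yields $2\Phi(t)=\pmd{\zeta(\tfrac12+it)}^2+E(t)+O(T^{-A})$. Finally, I would bound $E(t)$ by moving its contour slightly to the right of the zeta poles (for instance to $\Re w=\tfrac12+\delta$), where the $1/t$ Stirling saving, the convexity bound for $\zeta$ on $\Re s=1+\delta$, and the rapid decay of $G$ combine to give $E(t)\ll T^{-2/3}$ comfortably. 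The main technical obstacle is ensuring that the Stirling expansion is valid uniformly along the relevant portion of the contour, and that no unwanted poles are crossed during either deformation; the assumption $G(\tfrac12)=0$ is a convenient safety valve should any step require passing through the line $\Re w=\tfrac12$.
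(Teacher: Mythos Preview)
The paper does not supply its own proof of this lemma; it simply refers to Lemma~3 of \cite{LR}. Your outline is the standard approximate functional equation argument and is structurally sound: the Mellin--Barnes integral $\Phi(t)$, the contour shift picking up $\pmd{\zeta(\tfrac12+it)}^2$ at $w=0$ (with the residues at $w=\tfrac12\mp it$ killed by the vertical decay of $G$), the reflection $w\mapsto -w$ combined with the functional equation, and the Stirling identification $\tilde\Phi(t)=-\Phi(t)+E(t)$ are all correct. This is essentially the argument one would find in \cite{LR}.

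There is, however, a quantitative slip in your bound for $E(t)$. On the line $\Re w=\tfrac12+\delta$ the zeta factors lie on $\Re s=1+\delta$ and are bounded, but the Stirling remainder contributes a factor of size $t^{\Re w}\cdot t^{-1}$, so you only obtain
\[
E(t)\ \ll\ t^{\,1/2+\delta}\cdot t^{-1}\ =\ t^{-1/2+\delta},
\]
which is weaker than the stated $O(T^{-2/3})$. To recover $T^{-2/3}$ one moves the $E(t)$--contour further left, to $\Re w=\eps$ (or to $\Re w=0$, where the apparent pole at $w=0$ is removable since $\chi(\tfrac12+it)\chi(\tfrac12-it)=1$), and then invokes Weyl's bound $\zeta(\tfrac12+i\tau)\ll |\tau|^{1/6}$ on the two zeta factors; this yields $E(t)\ll t^{-1}\cdot t^{1/3}=t^{-2/3}$. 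Finally, your remark about $G(\tfrac12)=0$ is a red herring for this lemma: that hypothesis is not needed here, and in the paper it is used only later in the evaluation of $\mathcal A_0$.
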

The proof of the lemma can be found in Lemma 3 of \cite{LR}.
\begin{rem*}
Notice that $W^{(\ell)}(x) \ll_{\ell,A} \min\pr{1,x^{-A}}$ for $x>0$ and all $\ell\in\N$ .
\end{rem*}

The error term in Lemma~\ref{lem:fncofZetasq} produces an error term bounded by $T^{\frac 13 + \eps}$, and thus
\est{
I &= 2\sum_{n_1, n_2, m_1, m_2} \frac{a_{n_1}\overline a_{n_2}}{(m_1m_2n_1n_2)^{\frac12}}\int_{\R}\pr{\frac{m_1n_2}{m_2n_1}}^{it}W\pr{\frac{2\pi m_1m_2}t}\phi\pr{\frac tT}\,dt  + O(T^{\frac 13 + \eps})\\
&= \mathcal D + \s + O(T^{\frac 13 + \eps}),
}
where the sum is over $n_1, n_2 \leq N$, $\mathcal D$ is the sum when $m_1n_2 = m_2n_1$, and $\s$ is the sum when $m_1n_2 \neq m_2n_1.$

\subsection{Diagonal terms} \label{sec:diagonalterm}
Firstly, we consider the diagonal terms  $m_1n_2= m_2n_1$. For $j=1,2$, we write $m_j=\ell n_j^*$, where $n_j^* = \frac{n_j}{(n_1,n_2)}$. The contribution of the diagonal term is
\es{\label{DT}
\mathcal D = 2\sum_{n_1,n_2, \ell}&\frac{a_{n_1}\overline a_{n_2}(n_1,n_2)}{\ell n_1n_2}\int_{\R}W\pr{\frac{2\pi \ell^2 n_1^*n_2^*}{t}}\phi\pr{\frac tT}\,dt\\
&= \frac{2}{2\pi i} \sum_{n_1,n_2,\ell}\frac{a_{n_1}\overline a_{n_2}(n_1,n_2)}{\ell n_1n_2}\int_{\R}\int_{(2)}\pr{\frac{2\pi \ell^2 n_1^*n_2^*}{t}}^{-w}G(w)\frac{dw}w\phi\pr{\frac tT}\,dt\\
&=\frac{2}{2\pi i} \sum_{n_1,n_2}\frac{a_{n_1}\overline a_{n_2}(n_1,n_2)}{ n_1n_2}\int_{\R}\int_{(2)}\pr{\frac{t}{2\pi  n_1^*n_2^*}}^{w}\zeta(1+2w)G(w)\frac{dw}w\phi\pr{\frac tT}\,dt.
}

This term will be later combined with a contribution from the off-diagonal terms. Together, they give the main term in Theorem~\ref{thm:asymptotic}.

\subsection{Off-Diagonal terms}
In this section, we consider the terms with $m_1n_2 \neq m_2n_1$.  We write $m_1n_2- m_2n_1 = \Delta.$  

Since $W(x) \ll x^{-A}$ when $x \gg 1,$ we can truncate the sum over $m_1, m_2$ to when $m_1m_2 \leq T^{1 + \eps}.$ We introduce a smooth partition of unity
\es{\label{pofu}
1=\sumprime_{M}F_{M}(x),\qquad  T^{-100}\leq x\leq T^{1+\eps},
}
where $F_{M}(x)$ is smooth, supported in $[M/2,3M]$, and it satisfies $F^{(j)}_{M}(x)\ll_j \frac{1}{M^j}$ for all $j\geq 0$. 
Moreover we can choose a partition of unity which satisfies $\sum'_{M}1\ll \log (2+T)$. Therefore
\es{\label{frS}
\s&= 2 \sumprime_{N_1}\sumprime_{N_2}\sumprime_{M} \sum_{\Delta \neq 0}\sum_{\substack{n_1, n_2, m_1, m_2 \\ m_1n_2 - m_2n_1 = \Delta} } \frac{a_{n_1}\overline a_{n_2}}{(m_1m_2n_1n_2)^{\frac12}} \\
&\quad \times \left(\int_{\R}\pr{1 + \frac{\Delta}{m_2n_1}}^{it}W\pr{\frac{2\pi m_1m_2}t}\phi\pr{\frac tT}\,dt \right) F_{N_1}(n_1) F_{N_2}(n_2)F_M\left( m_2\right)+O(1),
}
where $N_1, N_2 \leq N$ and $M \leq T^{1 + \eps}.$

Next we show that the terms with $|\Delta|> D$, $D:=\frac{MN_1 }{T^{1-\varepsilon}}$, give a negligible contribution. In fact, 
\est{
\frac{d^\ell}{dt^\ell}W\pr{\frac{2\pi m_1m_2}t}\ll_{\ell,A}\frac1{t^\ell}\min\pr{1,\pr{\frac{2\pi m_1m_2}t}^{-A}},
}
whence, integrating by part $\ell$ times, we have 
\est{
&\sumprime_{N_1,N_2,M}\sum_{|\Delta| > D} \sum_{n_1, m_2} \sum_{\substack{n_2, m_1\\ m_1n_2- m_2n_1=\Delta}}\frac{a_{n_1}\overline a_{n_2}}{(m_1m_2n_1n_2)^{\frac12}} \left(\int_{\R}\pr{1+\frac{\Delta}{m_2n_1}}^{it}W\pr{\frac{2\pi m_1m_2}t}\phi\pr{\frac tT}\,dt\right) \\
&\hskip 3in \times F_{N_1}(n_1) F_{N_2}(n_2)F_M\left( m_2\right)\\
&\quad\ll_\ell \sumprime_{N_1,M} \sum_{|\Delta| > D} \sum_{\substack{n_1 \sim N_1, m_2 \sim M}} \sum_{\substack{m_1 \leq T^{1 + \eps} \\ n_2 \leq N,\\  m_1n_2- m_2n_1=\Delta}}\frac{T^{-\ell+1+\varepsilon}}{(m_1m_2n_1n_2)^{\frac12}}\pmd{\log\pr{1+\frac{\Delta}{m_2n_1}}}^{-\ell} \\
&\quad\ll_\ell  \sumprime_{N_1,M}  \sum_{|\Delta| > D}  \sum_{\substack{n_1 \sim N_1, m_2 \sim M}} \frac{1}{\sqrt{n_1m_2}}\sum_{\substack{m_1 \leq T^{1 + \eps} \\ n_2 \leq N,\\  m_1n_2- m_2n_1=\Delta}}\frac{1}{\sqrt{n_2m_1}}T^{-\ell+1+\varepsilon} \cdot \pr{\frac{m_2n_1}{|\Delta|}}^\ell 
 \ll_{A, \varepsilon} T^{-A},
}
where $\ell$ is large enough. 

Now, if $|\Delta|< D$, then $\frac{\Delta}{m_2n_1} \ll \frac{1}{T^{1-\eps}},$
and
\est{
m_1=\frac{m_2n_1+\Delta}{n_2}=m_2\frac{n_1}{n_2}\pr{1+ \frac{\Delta}{m_2n_1}}.
}
 
Hence for $T < t < 2T,$

$$ \frac{1}{m_1} = \frac{n_2}{m_2n_1} \left(  1 - \frac{\Delta}{m_2n_1} +O\pr{\frac1{T^{2-\varepsilon}}}\right);$$

\est{
\pr{1+\frac{\Delta}{m_2n_1}}^{it}=e^{it\log\pr{1+\frac{\Delta}{m_2n_1}}}
=e^{it\frac{\Delta}{m_2n_1}}\pr{1 - \frac{it\Delta^2}{2m_2^2n_1^2} + O\pr{\frac{1}{T^{2-\varepsilon}}}},
}
and
$$ W\left( \frac{2\pi m_1m_2}{t}\right) = W\left( \frac{2\pi m_2^2 n_1}{tn_2}\right) + \frac{2\pi m_2\Delta}{tn_2} W'\left( \frac{2\pi m_2^2n_1}{\new{t}n_2}\right) +O\left(\frac{1}{T^{2-\eps}}\right).$$
Since $m_1m_2 \leq T^{1 + \eps},$ we have $m_2(m_2n_1 + \Delta) \leq n_2 T^{1 + \eps}.$ Hence $M \ll T^{1/2 + \eps} \sqrt{\frac{N_2}{N_1}},$ and the error term from using the above approximations in~\eqref{frS} is 
\begin{align*}
&\ll \frac{T}{T^{2-\eps}} \sumprime_{N_1} \sumprime_{N_2}\sumprime_{M \ll T^{1/2 + \eps} \sqrt{\frac{N_2}{N_1}}} \sum_{0<|\Delta|\leq D} \ \sum_{\substack{n_1 \sim N_1 \\ n_2 \sim N_2}}\sum_{m_2 \sim M}\frac{1}{m_2n_1} \\
&\ll  \sumprime_{N_1} \sumprime_{N_2}\sumprime_{M \ll T^{1/2 + \eps}\sqrt{{N_2}/{N_1}}} \frac{M N_1 N_2}{T^{2 - \varepsilon}} \ll \frac{\sqrt{T} N_2^{3/2} N_1^{1/2}}{T^{2 - \varepsilon}} \ll \frac{N^2}{T^{3/2 - \varepsilon}},
\end{align*}
using that $D = M N_1 / T^{1 - \varepsilon}$ and that
$M \ll T^{1/2 + \varepsilon} \sqrt{\frac{N_2}{N_1}}$. 
Thus, we have 
\est{
\s=\mathcal A + \mathcal{E}+O\pr{1+\frac{N^2}{T^{3/2 - \varepsilon}}},
}
where
\es{\label{fl}
\mathcal A &= 2 \sumprime_{N_1}\sumprime_{N_2} \sumprime_{M \leq T^{1/2 + \eps}\sqrt{\frac{N_2}{N_1}}} \ \sum_{0<|\Delta|\leq D} \sum_{n_1,n_2} \ \sum_{\substack{n_1m_2\equiv -\Delta\mod {n_2} \\ m_2 > 0} } \ \frac{a_{n_1}\overline a_{n_2}}{m_2n_1}\times\\
&\quad\times\left(\int_{\R}\e{\frac{\Delta t}{2\pi m_2n_1}} W\pr{\frac{2\pi m_2^2n_1}{tn_2}}\phi\pr{\frac tT}\,dt \right)F_M(m_2) F_{N_1}(n_1) F_{N_2}(n_2),
}
and
\es{\label{eqn:errorLinear}
\EE &= 2 \sumprime_{N_1}\sumprime_{N_2} \sumprime_{M \leq T^{1/2 + \eps}\sqrt{\frac{N_2}{N_1}}} \ \sum_{0<|\Delta|\leq D} \sum_{n_1,n_2} \ \sum_{\substack{n_1m_2\equiv -\Delta\mod {n_2} \\ m_2 > 0} } \ \frac{a_{n_1}\overline a_{n_2}}{m_2n_1}F_M(m_2) F_{N_1}(n_1) F_{N_2}(n_2)\times\\
&\quad\times \int_{\R}\e{\frac{\Delta t}{2\pi m_2n_1}} \left[W\pr{\frac{2\pi m_2^2n_1}{tn_2}} \pr{-\frac{\Delta}{2m_2n_1} - \frac{it\Delta^2}{2m_2^2n_1^2}} + \frac{2\pi m_2\Delta}{tn_2}W'\pr{\frac{2\pi m_2^2n_1}{tn_2}}\right]\phi\pr{\frac tT}\,dt,
}
since the rest of the terms arising from the above approximations also give a contribution which is $O\pr{N^2T^{-3/2+ \varepsilon}}$.

First, we consider $\mathcal A$. Giving an eligible bound for $\EE$ is easy and we will do it in the next section. 

Extracting the common divisor $d$ of $n_1$ and $n_2$, we re-write the sum~\eqref{fl} as
%
%
\est{
\mathcal A &=2 \sum_{d \leq N} \frac{1}{d}\ \sumprime_{N_1,N_2 \leq N} \ \sumprime_{M \leq T^{1/2 + \eps}\sqrt{\frac{N_2}{N_1}}} \ \sum_{0<|\Delta|\leq \frac{D}{d}} \sum_{\substack{n_1,n_2 \\ (n_1,n_2) = 1}} a_{dn_1}\overline a_{dn_2}\ F_{N_1}(d n_1) F_{N_2}(d n_2) {\mathcal A}_{M, N_i}(n_1, n_2, \Delta) ,\\
}
where 
$$  \mathcal A_{M, N_i}(n_1, n_2, \Delta) = \sum_{\substack{m_2\equiv -\overline{n_1}\Delta\mod {n_2} } } \ \frac{F_M(m_2)}{m_2n_1} \ \left(\int_{\R}\e{\frac{\Delta t}{2\pi m_2n_1}} W\pr{\frac{2\pi m_2^2n_1}{tn_2}}\phi\pr{\frac tT}\,dt \right).$$
By Poisson summation formula, 
\est{
\mathcal A_{M, N_i}\pr{n_1,n_2, \Delta}&=\frac1{n_1n_2}\sum_{h\in\Z}\e{-\frac{ h \overline n_1\Delta}{n_2}}\int_0^\infty \e{-\frac{hx}{n_2}}\frac{F_{M}\pr{x}}{x}\times\\
&\quad\times\int_\R\e{\frac{\Delta t}{2\pi xn_1}}W\pr{\frac{2\pi x^2n_1}{tn_2}}\phi\pr{\frac tT}\,dt\,dx.
}
After the change of variable $x\rightarrow \frac x{n_1}$, this becomes
\est{
\mathcal A_{M, N_i}\pr{n_1,n_2,\Delta}& = \frac1{n_1n_2}\sum_{h\in\Z}\widetilde {\mathcal A}_{M, N_i}\pr{h,n_1,n_2,\Delta} \e{-\frac{ h \overline n_1\Delta}{n_2}},
}
where
\est{
\widetilde {\mathcal A}_{M, N_i}\pr{h,n_1,n_2,\Delta}&=\int_0^\infty \e{-\frac{hx}{n_1n_2}}\frac{F_{M}\pr{\frac x{n_1}}}{x}\int_{\R}\e{\frac{\Delta t}{2\pi x}}W\pr{\frac{2\pi x^2}{n_1n_2t}}\phi\pr{\frac tT}\,dt\, dx.
}
To understand the contribution of $\widetilde {\mathcal A}_{M, N_i}\pr{h,n_1,n_2,\Delta}$, we consider
the following three cases.
\subsubsection*{Case 1: $h = 0$.}  The contribution to $\mathcal A$ from $h = 0$ is 
\es{\label{asa}
\mathcal A_0 &= 2 \sum_{d \leq N}  \sumprime_{N_1 \leq N}\ \sumprime_{N_2 \leq N} \ \sumprime_{M \leq T^{1/2 + \eps}\sqrt{\frac{N_2}{N_1}}} \ \sum_{0<|\Delta|\leq \frac Dd} \sum_{\substack{n_1,n_2 \\ (n_1,n_2) = 1}} \frac{a_{dn_1}\overline a_{dn_2}\ F_{N_1}(d n_1) F_{N_2}(d n_2)}{dn_1n_2}  \\
& \hskip 1in \times \int_\R\int_0^\infty F_M\left( \frac{x}{n_1}\right)\e{\frac{\Delta t}{2\pi x}}W\pr{\frac{2\pi x^2}{n_1n_2t}}\, \frac{dx}x \phi\pr{\frac tT} \,dt.
}
Now, we can extend the sum over $\Delta$ to $\Delta\in\Z\setminus\{0\}$, since it can be shown as before that the terms $|\Delta|\geq D/d$ give a negligible contribution. Making the change of variables $y= t/x$ and integrating by parts twice we see that the second line of~\eqref{asa} is equal to
\est{
&-\frac1{\Delta^2}\int_\R\int_0^\infty \e{\frac{\Delta y}{2\pi }}\frac{d^2}{dy^2}\pr{F_M\left( \frac{t}{n_1y}\right) W\pr{\frac{2\pi t}{n_1n_2y^2}}\frac1y}\, {dy}\, \phi\pr{\frac tT} \,dt\\
&=-\frac1{\Delta^2}\int_\R\int_{R(t,n_{1})} \e{\frac{\Delta y}{2\pi }}\frac{d^2}{dy^2}\pr{F_M\left( \frac{t}{n_1y}\right) W\pr{\frac{2\pi t}{n_1n_2y^2}}\frac1y}\, {dy}\, \phi\pr{\frac tT} \,dt+O\pr{\frac1{\Delta^2}},\\
}
where $R(t,n_1)=\{y\mid T^{-100}<\frac{t}{n_1y}<T^{1/2 + \eps}\sqrt{\frac{N_2}{N_1}}\}$ and where we estimated trivially the part of the integral over $y$ with $y\in\R_{>0}\setminus R(t,n_1)$, using the properties of $W$ and $F_M$ (and $n_1\ll T$). Thus, summing over $M$ we have
\est{
&\sumprime_M\int_\R\int_0^\infty F_M\left( \frac{x}{n_1}\right)\e{\frac{\Delta t}{2\pi x}}W\pr{\frac{2\pi x^2}{n_1n_2t}}\, \frac{dx}x \phi\pr{\frac tT} \,dt=\\
&\hspace{1cm}=-\frac1{\Delta^2}\int_\R\int_{R(t,n_{1})} \e{\frac{\Delta y}{2\pi }}\frac{d^2}{dy^2}\pr{W\pr{\frac{2\pi t}{n_1n_2y^2}}\frac1y}\, {dy}\, \phi\pr{\frac tT} \,dt+O\pr{\frac{\log (2 + T)}{\Delta^2}}\\
&\hspace{1cm}=-\frac1{\Delta^2}\int_\R\int_{0}^\infty \e{\frac{\Delta y}{2\pi }}\frac{d^2}{dy^2}\pr{W\pr{\frac{2\pi t}{n_1n_2y^2}}\frac1y}\, {dy}\, \phi\pr{\frac tT} \,dt+O\pr{\frac{\log (2 + T)}{\Delta^2}}.
}
Therefore, summing over $N_1,N_2$, we have 
\est{
\mathcal A_0 &= -2 \sum_{d \leq N} \sum_{|\Delta|\neq0} \sum_{\substack{n_1,n_2\leq \frac Nd\\ (n_1,n_2) = 1}} \frac{a_{dn_1}\overline a_{dn_2}}{dn_1n_2\Delta^2}  \int_\R\int_{0}^\infty \e{\frac{\Delta y}{2\pi }}\frac{d^2}{dy^2}\pr{W\pr{\frac{2\pi t}{n_1n_2y^2}}\frac1y}\, {dy}\, \phi\pr{\frac tT} \,dt\\
&\quad+O(T^\eps)\\
&= -2 \sum_{d \leq N}  \sum_{\substack{n_1,n_2\leq \frac Nd\\ (n_1,n_2) = 1}} \frac{a_{dn_1}\overline a_{dn_2}}{dn_1n_2}  \int_\R\int_{0}^\infty \sum_{|\Delta|\neq0}  \frac{1}{\Delta^2}\e{\frac{\Delta y}{2\pi }}\frac{d^2}{dy^2}\pr{W\pr{\frac{2\pi t}{n_1n_2y^2}}\frac1y}\, {dy}\, \phi\pr{\frac tT} \,dt\\
&\quad+O(T^\eps)\\
&=\mathcal A_{0,+}+\mathcal A_{0,-} + O(T^\eps),
}
\new{where we can take the sum over $\Delta$ inside the integrals since they converge absolutely, and after a change of variables,}
\est{
\mathcal A_{0,\pm} &= -2 \sum_{d \leq N} \sum_{\substack{n_1,n_2\leq \frac Nd\\ (n_1,n_2) = 1}} \frac{a_{dn_1}\overline a_{dn_2}}{dn_1n_2}  \int_\R\int_{0}^{\infty} \sum_{\Delta=1}^\infty  (e^{iy} + e^{-iy})\frac{d^2}{dy^2}\pr{W\pr{\frac{2\pi \Delta^2 t}{n_1n_2y^2}}\frac1y}\, {dy}\, \phi\pr{\frac tT} \,dt \\
&= \new{-2  \sum_{\substack{n_1,n_2\leq N}} \frac{a_{n_1}\overline a_{n_2} (n_1, n_2)}{n_1n_2}  \int_\R\int_{0}^{\infty} \sum_{\Delta=1}^\infty 2\cos (y) \frac{d^2}{dy^2}\pr{W\pr{\frac{2\pi \Delta^2 t}{n_1^*n_2^*y^2}}\frac1y}\, {dy}\, \phi\pr{\frac tT} \,dt,}
}
where we recall that $n_i^* = \frac{n_i}{(n_1, n_2)}$ for $i = 1, 2.$ We notice that 
\est{
\sum_{\Delta=1}^\infty \frac{d^2}{dy^2}\pr{\new{W\pr{\frac{2\pi \Delta^2t}{n_1^*n_2^*y^2}}}\frac1y}&= \frac{1}{2\pi i}\sum_{\Delta=1}^\infty\int_{(2)}\pr{\frac{2\pi\Delta^2 t}{ n_1^*n_2^*}}^{-w}(2w-1)(2w-2)y^{2w-3}G(w)\frac{dw}w\\
&= \frac{1}{2\pi i} \int_{(\frac54)}\zeta(2w)\pr{\frac{2\pi t}{ n_1^*n_2^*}}^{-w}(2w-1)(2w-2)y^{2w-3}G(w)\frac{dw}w.\\
}
  For $0<\Re(s)<1$, we have
\est{
\int_{0}^{\infty}\cos(y)y^{s-1}{dy}=\Gamma(s) \cos\pr{\frac{\pi s}{2}}
}
(see, for example,~\cite{GR}, formula 3.381, 5., page 346), whence we are left with
\est{
\frac{-2}{2\pi i} \int_{(\frac14)} \new{\Gamma(2w)} \cos\pr{\pi  w}\zeta(2w)\pr{\frac{2\pi t}{ n_1^*n_2^*}}^{-w} G(w)\,\frac{dw}w,
}
where we used the multiplication formula for the gamma function, the identity $\cos(x - \pi) = -\cos (x),$ and we moved the line of integration without encountering any pole, due to the assumption that $G(w)$ vanishes at $w=\frac12$. Thus,
\est{
&\mathcal A_0 +O(T^\eps)\\
&=  \frac{2}{2\pi i} \sum_{\substack{n_1,n_2}} \frac{a_{n_1}\overline a_{n_2}(n_1,n_2)}{n_1n_2}   \int_\R
 \int_{(\frac14)} 2\cos\pr{\pi  w}\new{\Gamma(2w)} \zeta(2w)\pr{\frac{2\pi t}{ n_1^*n_2^*}}^{-w} G(w)\,\frac{dw}w
\, \phi\pr{\frac tT} \,dt\\
 &=   \frac{-2}{2\pi i}\sum_{\substack{n_1,n_2}} \frac{a_{dn_1}\overline a_{n_2}(n_1,n_2)}{n_1n_2}   \int_\R
\int_{(-\frac14)}  \zeta(1+2w)\pr{\frac{t}{2\pi n_1^*n_2^*}}^{w} G(w)\,\frac{dw}w
\, \phi\pr{\frac tT} \,dt,\\
}
\new{where we used the functional equation of the Riemann zeta function (e.g. Chapter 10 in \cite{Dav}), and then we made the change of variables $w\rightarrow-w$   and use the fact that $G(w) = G(-w)$.} 

From~\eqref{DT}, we have
\est{ 
\mathcal D + \mathcal A_0 &= \frac{2}{2\pi i} \sum_{n_1, n_2} \frac{a_{n_1}\overline a_{n_2}(n_1, n_2)}{n_1n_2}\int_{\mathbb R} \int_{(2)} \pr{\frac{t}{2\pi  n_1^*n_2^*}}^{w}\zeta(1+2w)G(w)\frac{dw}w\phi\pr{\frac tT}\,dt \\
&\quad-\frac{2}{2\pi i} \sum_{n_1, n_2} \frac{a_{n_1}\overline a_{n_2}(n_1, n_2)}{n_1n_2}\int_{\mathbb R} \int_{(-1/4)} \pr{\frac{t}{2\pi  n_1^*n_2^*}}^{w}\zeta(1+2w)G(w)\frac{dw}w\phi\pr{\frac tT}\,dt +O(T^\eps)\\
&= \sum_{n_1, n_2} \frac{a_{n_1}\overline a_{n_2}(n_1, n_2)}{n_1n_2} \int_{\mathbb R}\left(\log \frac{t}{2\pi n_1^* n_2^*} + 2\gamma\right)\phi\left( \frac{t}{T}\right) \> dt+O(T^\eps),\\
}  
since
$$  {\rm Res}_{w = 0} x^w\zeta(1 + 2w) \frac{G(w)}{w}   = \frac{1}{2}\log x + \gamma.$$
Now we have the main term. The rest of the off-diagonal terms contribute to the error term as shown in the following two cases. 

\subsubsection*{Case 2: $|h| \geq \frac{N_2}{dM}T^{\eps}$}
In this case and in Case 3, we define $H_d := \frac{N_2}{dM}T^{\eps}.$
By changing variable $t = xy$, we have
\est{
\frac{1}{n_1n_2}\widetilde {\mathcal A}_{M,N_i}\pr{h,n_1,n_2,\Delta}&= \frac{1}{n_1n_2}\int_\R \e{\frac{\Delta y}{2\pi }} \int_0^\infty \e{-\frac{hx}{n_1n_2}}F_{M}\pr{\frac{x}{n_1}}W\pr{\frac{2\pi x}{n_1n_2y}}\phi\pr{\frac {yx}T}\, dx \> dy.
}
Since $F_M$ is supported in $[M/2, 3M]$, $x \asymp \frac{N_1M}{d}$. Moreover, $\frac yT \asymp \frac{1}{x} \asymp \frac{d}{N_1M}$ due to the support of $\phi$, and $\frac{1}{n_1n_2y} \ll \frac{T^{\eps_1}}{x} \asymp \frac{dT^{\eps_1}}{N_1M}$ because of the rapid decay of $W.$ Hence integrating by parts $\ell + 1$ times, for $T\leq t\leq 2T$ we have
\est{
&\frac{1}{n_1n_2}\int_0^\infty \e{-\frac{hx}{n_1n_2}}F_{M}\pr{\frac{x}{n_1}}W\pr{\frac{2\pi x}{n_1n_2y}}\phi\pr{\frac {yx}T}\, dx\\
&\ll_{\ell, \eps} \frac{d^2}{N_1N_2} \left( \frac{n_1n_2}{h} \frac{dT^{\eps_1}}{MN_1}\right)^{\ell + 1} \frac{MN_1}{d}\\
&\ll \left(\frac{T^{\eps_1}}{h}\right)^{\ell+1} \left( \frac{N_2}{ dM}\right)^{\ell}.
}
Therefore, the contribution to $\s$ when $|h| > H_d$ is
\est{
&\ll \sum_{d \leq N}\  \sumprime_{\substack{N_1,N_2 \leq N,\\M \leq T^{1/2 + \eps}\sqrt{\frac{N_2}{N_1}}}} \ \sum_{0<|\Delta|\leq \frac Dd} \sum_{\substack{n_1,n_2 \\ (n_1,n_2) = 1}} \frac{a_{dn_1}\overline a_{dn_2}\ F_{N_1}(d n_1) F_{N_2}(d n_2)}{d} \sum_{|h| \geq H_d} \frac{dT}{hN_1M} \left( \frac{N_2T^{\eps_1}}{ dM h}\right)^{\ell}  \\
&\ll T^{-A}, }
when $\ell$ is sufficiently large. Thus, the terms $|h|> H_d$  give a negligible contribution. 

\subsubsection*{Case 3: $0<|h| < H_d$} It is sufficient to consider  the terms $0<h<H_d$. By changing variables $t = yx,$ and $x$ to $xn_1n_2$, we will consider the dyadic contribution 
\est{{\mathcal A}^*_{M, N_1, N_2} &:=  \sum_{\substack{n_1,n_2  \\ (n_1, n_2) = 1}} \sum_{0<|\Delta|\leq \frac{D}{d}} \sum_{0<h < H_d} \frac{a_{dn_1} \overline a_{dn_2}\ F_{N_1}(d n_1) F_{N_2}(d n_2)}{d} \times \\
& \quad\times  \e{-\frac{ h \Delta \overline n_1}{n_2}}\int_0^\infty \e{-hx}F_{M}\pr{x n_2}\int_\R\e{\frac{\Delta y}{2\pi }}W\pr{\frac{2\pi x}{y}}\phi\pr{\frac {yxn_1n_2}T}\,dy\, dx.}

We write $\phi$ in term of its Mellin transform $\widetilde{\phi}$, to separate the variables $n_1$ and $n_2$. Let $h\Delta = a$, $A = \frac{DH_d}d = \frac{N_1N_2}{d^2T^{1 - \eps}},$ and $\nu_{x,y}(a) = \sum_{h\Delta = a} \e{-hx + \frac{\Delta y}{2\pi}} $. Therefore we have
\est{
{\mathcal A}^*_{M, N_1, N_2} &=  \frac{1}{(2\pi i)d}\int_0^\infty\int_\R\int_{(\varepsilon)} W\pr{\frac{2\pi x}{y}}\sum_{0<|a| < A} \nu_{x,y}(a)\times  \\
& \quad\times \sum_{\substack{n_1,n_2  \\ (n_1, n_2) = 1}} \frac{a_{dn_1}\overline a_{dn_2}\ F_{N_1}(d n_1) F_{N_2}(d n_2)F_{M}\pr{xn_2}}{n_1^wn_2^w}  \e{-\frac{a \overline n_1}{n_2}} \widetilde{\phi}(w)\frac{T^{w}}{x^wy^w} \> dw \,dy\, dx.
}
Since $F_M$ is supported in $[M/2, 3M]$, $x \asymp \frac {dM}{N_2}$. Moreover, $y \asymp \frac{T}{xn_1n_2} \asymp \frac{Td}{MN_1}$ because $\phi$ is supported in [1,2]. Thus, using Proposition~\ref{prop:DFIimproved}, we have
\es{ \label{eqn:hSmallbound}
{\mathcal A}^*_{M, N_1, N_2} &\ll  \frac{1}{d}\int_{x \asymp \frac {dM}{N_2}} \int_{y \asymp \frac{Td}{MN_1}}
 \left(\frac{ (N_1N_2A) ^{\frac{17}{20}+\eps}}{d^{\frac{17}{10} - \varepsilon}} \frac{(N_1 + N_2)^{\frac{1}{4}}}{d^{\frac{1}{4}}} +\frac{ (N_1N_2A)^{\frac{7}8+\eps} }{d^{\frac{7}4 - \varepsilon}} \frac{(AN_1 + AN_2)^{\frac{1}{8}}}{d^{\frac{1}{8}}} \right)\,dy\, dx   \\
&\ll  
 \left(T^{\frac{3}{20}+\eps} \frac{ (N_1N_2) ^{\frac{7}{10}}}{d^{\frac{53}{20} }} (N_1 + N_2)^{\frac{1}{4}} +T^\eps\frac{ (N_1N_2)^{\frac{7}8} }{d^{\frac{23}8 }}(N_1 + N_2)^{\frac{1}{8}} \right)   
}
Summing over dyadic intervals for $M \leq T^{1/2+\eps}\sqrt{\frac{N_2}{N_1}}$, $N_i \leq N$, and $d \leq N,$ we have that
the contribution to $\mathcal A$ when $|h| \leq H_d$ is bounded by
$ T^{\frac{3}{20} + \eps} N^{\frac{33}{20}} + T^{ \eps}N^{\frac{15}{8}}.$ Therefore we take $N$ up to $T^{\frac{17}{33}-\eps}$ to obtain an eligible error term in Theorem~\ref{thm:asymptotic}.

\subsection{A trivial bound for $\EE$}
Extracting the common divisor $d$ from $n_1$ and $n_2$, applying Poisson summation formula, and changing variables, we can write~\eqref{eqn:errorLinear} as
\est{
\EE &=  2\sum_{d \leq N} \frac{1}{d} \sumprime_{N_1,N_2 \leq N} \ \sumprime_{M \leq T^{1/2 + \eps}\sqrt{\frac{N_2}{N_1}}}  \sum_{0<|\Delta|\leq \frac{D}{d}}  \sum_{\substack{n_1,n_2 \\ (n_1,n_2) = 1}} a_{dn_1}\overline a_{dn_2}\ F_{N_1}(d n_1) F_{N_2}(d n_2) \EE_{M, N_i}(n_1, n_2, \Delta) ,\\
}
where 
\est{
\EE_{ M, N_i}\pr{n_1,n_2,\Delta}& = \frac1{n_1n_2}\sum_{h\in\Z} \e{-\frac{ h \overline n_1\Delta}{n_2}}\int_0^\infty \e{-\frac{hx}{n_1n_2}}F_{M}\pr{\frac x{n_1}}\int_{\R}\e{\frac{\Delta t}{2\pi x}}\phi\pr{\frac tT} \times\\
&\quad\times \left[W\pr{\frac{2\pi x^2}{n_1n_2t}} \pr{-\frac{\Delta}{2x^2} - \frac{it\Delta^2}{2x^3}} + \frac{2\pi \Delta}{n_1n_2t}W'\pr{\frac{2\pi x^2}{n_1n_2t}}\right] \,dt\, dx.
}
Integrating by parts, as in Case 2 of the previous section, we see that the contribution coming from the terms with $|h| > H_d$ is $O(1)$. Thus, estimating trivially the rest of the terms we have
\est{
\EE_{ M, N_i}\pr{n_1,n_2,\Delta}\ll \frac{T^\eps}{n_1n_2}\pr{1+\frac{N_2}{dM}},
}
whence
\est{
\EE 
&\ll  T^{-1/2 + \eps}N +T^{-1+\eps}{N^2}\ll NT^\eps\\
}
and the proof of Theorem~\ref{thm:asymptotic} is complete.

\subsection{The proof of Theorem~\ref{thm:asymwithgeneraltrilinear}}
The proof of Theorem~\ref{thm:asymwithgeneraltrilinear} is the same as Theorem~\ref{thm:asymptotic} except that we use~\eqref{eqn:gentrilinear} instead of Proposition~\ref{prop:DFIimproved} in~\eqref{eqn:hSmallbound}. Notice that~\eqref{eqn:gentrilinear} is applicable, since $A=\frac{N_1N_2}{d^2T^{1 - \eps}}\leq \pr{\frac{N_1N_2}{d^2}}^{\frac{0.5-r}{1+2t}+\eps}$ by $\frac{N_i}d\leq N\leq T^{\frac12+\frac{0.5-r}{1+2(r+2t)}}$. Thus, we obtain that
\begin{align*} 
{\mathcal A}^*_{M, N_1, N_2} &\ll  \frac{T^\eps}{d} \int_{x \asymp \frac {dM}{N_2}} \int_{y \asymp \frac{Td}{MN_1}}
\frac{ \sqrt{N_1N_2A} }{d} \left(\frac{(N_1 + N_2)^{\frac 12 + r}A^{t}}{d^{\frac 12 + r}}\right) + \frac{A N_1}{d} + \frac{AN_2}{d} \,dy\, dx  \nonumber \\
&\ll  T^{\frac12+\eps-t} \left(\frac{(N_1 + N_2)^{\frac 12 + r}(N_1N_2)^{t}}{d^{\frac 32 + r+2t}} \right) + \frac{N_1}{d^2} + \frac{N_2}{d^2}. \nonumber 
\end{align*}
Summing over dyadic intervals for $M, N_i$, we have that the contribution to $\mathcal A$ from these terms is $T^{\frac12+\eps-t} N^{\frac 12 + r+2t}+N^{1+\eps}$ and Theorem~\ref{thm:asymwithgeneraltrilinear} follows.

\section{Proof of Theorem~\ref{thm:zetatimestwopoly}} 
The proof of Theorem~\ref{thm:zetatimestwopoly} follows the proof of Theorem~\ref{thm:asymptotic} except the last part when $0<|h| < H_d$. Here we only modify the last part of the proof using the same arguments by Deshouillers and Iwaniec in \cite{DI}. By the same change of variables, we have to consider 
\est{
{\mathcal A}^*_{M, N_1, N_2} &= \sum_{\substack{n_1,n_2 \\  (n_1, n_2) = 1}}\sum_{0<|\Delta|\leq \frac Dd}  \sum_{0 < |h| < H_d} a_{dn_1}\overline a_{dn_2} F_{N_1}(dn_1) F_{N_2}(dn_2) \times \\
& \times  \e{-\frac{h \overline n_1\Delta}{n_2}}\int_0^\infty \e{-hx}F_{M}\pr{xn_2}\int_\R\e{\frac{\Delta y}{2\pi}}W\pr{\frac{ 2\pi x}{y}}\phi\pr{\frac { xyn_1n_2}T}\,dy\, dx. 
} 
We now write $a_{dn_1}$ as $\alpha_{\mu j}\beta_{\nu r}$, where $\mu | d^{\infty}$, $(d, j) = 1$, $n_1 = \varrho r j,$ $\nu = \frac{d}{(\mu, d)}$ and $\varrho = \frac{\mu}{(\mu, d)}.$ Therefore, we have to bound
\begin{align} \label{eqn:boundtwopolybeforelemmaDI}
&\sum_{d \leq T} \frac{1}{d} \sumprime_{N_1,N_2,M}\sum_{\substack{\mu | d^{\infty} \\ \nu = d/(\mu,d)}} \,  \sum_{0<|\Delta|\leq \frac Dd}  \sum_{0 < |h| < H_d} \sum_{(n_2, \varrho) = 1}  \overline a_{dn_2}  \sum_{(j, dn_2) = 1} \alpha_{\mu j} \sum_{(r, n_2) = 1} \beta_{\nu r}\e{-\frac{h \Delta \overline {\varrho r j}}{n_2}} \nonumber \\
&\times F_{N_1}\left( d\varrho rj \right)F_{N_2}(dn_2) \int_0^\infty \e{-hx}F_{M}\pr{x n_2}\int_\R\e{\frac{\Delta y}{2\pi}}W\pr{\frac{2\pi x}{y}}\phi\pr{\frac {xy\varrho r j n_2}{T}}\,dy\, dx,  
\end{align}
where the sums over $N_1$, $N_2$, $M$ are dyadic sums up to $NK$, $NK$, and $T^{1/2 + \epsilon} \sqrt{N_2/N_1}.$

To bound the above sum, we use Proposition~\ref{lem:DI}. However, first we need to apply Mellin's transform to $F_{N_1}$ and $\phi$ to separate variables $n_2, r, j$. The technique is standard, so we skip the details. From Proposition~\ref{lem:DI}, the sum over $\ell$ is the sum over $n_2$, and $L = \frac{N_2}{d}.$  The sum over $j$ is the sum over $j$, and $J \leq \frac{N}{\mu}$. The sum over $u$ is the sum over $h\Delta$, and $U = \frac{N_1N_2}{d^2T^{1 - 2\eps}}$. Finally the sum over $v$ is the sum over $r$, and $V \leq \frac{K}{\nu}.$ Moreover, we note that $JV\leq \frac{N_1}{d\varrho}.$ Applying Proposition~\ref{lem:DI}, we obtain that~\eqref{eqn:boundtwopolybeforelemmaDI} is bounded by (after summing over dyadic $M$)
\est{&\ll T^{\eps} \sum_{d \leq T} \sum_{\substack{\mu | d^{\infty} \\ \nu = d/(\mu,d)}} \, \sumprime_{N_1}\sumprime_{N_2}\frac{dT}{N_1N_2}  \left(\frac{N_1N_2}{\varrho^{1/2}d^2 T^{1/2}}\right)\left\{ \left( \frac{N_2N}{d\mu}\right)^{1/2} + \left( \frac{N_1N_2}{d^2T} + \frac{K}{\nu}\right)^{1/4}\right. \times \\
&\hskip 1.7in \times \left. \left[ \frac{N_2N}{d\mu} \left( \frac{N_1N_2}{Td^2} + \frac{\varrho K}{\nu}\right)\left(\frac{N_2}{d} + \frac{\varrho K^2}{\nu^2} \right) + \frac{\varrho N_1N_2N^2K^2}{d^2T \mu^2\nu^2} \right]^{1/4}\right\} \\
&\ll \sum_{d \leq T} \sum_{\substack{\mu | d^{\infty} \\ \nu = d/(\mu,d)}} \, \frac{T^{\frac12+\eps}}{\varrho^{1/2}d }\left\{ \left( \frac{N^2K}{d\mu}\right)^{1/2} + \left( \frac{N^2K^2}{d^2T} + \frac{K}{\nu}\right)^{1/4}\right. \times \\
&\hskip 1.7in \times \left. \left[ \frac{N^2K}{d\mu} \left( \frac{N^2K^2}{Td^2} + \frac{\varrho K}{\nu}\right)\left(\frac{NK}{d} + \frac{\varrho K^2}{\nu^2} \right) + \frac{\varrho N^4K^4}{d^2T \mu^2\nu^2} \right]^{1/4}\right\} \\
&\ll T^{\eps} \left( T^{1/2} N^{3/4}K + T^{1/2} NK^{1/2} + N^{7/4}K^{3/2}\right)\sum_{d \leq T} \sum_{\substack{\mu | d^{\infty} \\ \nu = d/(\mu,d) }} \,  \frac{1}{d^{5/4}} \frac{1}{\mu^{1/4}} \\ 
&\ll T^{\eps} \left( T^{1/2} N^{3/4}K + T^{1/2} NK^{1/2} + N^{7/4}K^{3/2}\right),}
and this completes the proof of Theorem~\ref{thm:zetatimestwopoly}.

\section{Proof of Theorem~\ref{thm:zetatimesproductoftwosmooth}}

The proof of Theorem~\ref{thm:zetatimesproductoftwosmooth} follows the proof of Theorem~\ref{thm:asymptotic} except the last part when $0<|h| < H_d$. 

We recall that we have $a_b = \sum_{nk = b} \alpha_n\beta_k$, and we assume that $\alpha_n$ is supported on $[N T^{-\xi_1},2N],$ where $N\ll T^{\frac12+\eps}$ and $0\leq\xi_1\leq \frac{1}{5}$. Moreover $\beta_k$ is supported on $k\in [KT^{-\xi_2},2K]$, and  $0\leq \xi_1\leq\frac1{16}$. Let $\xi_1+\xi_2=\xi$. We introduce smooth partitions of unity in the sums over $n$ and $k$ (without indicating it, to save notation). Thus, $n_i\approx N_i$, $k_i\approx K_i$  (note that in the notation of Section~\ref{sec:Thm1} $N_i$ was the size of $b=nk$, so $N_iK_i$ in the current notation). Thus, $db_i \asymp N_iK_i$. Moreover, we assume that $\alpha_n=\psi(n)$, where $\psi(x)$ is a smooth function such that $\psi^{(j)}(x)\ll_j x^{-j}$ for $j\geq0$.

We have to bound
\est{
{\mathcal A}'_{N,K} &:= \sum_{d \leq NK} \ \sumprime_{N_1, N_2, K_1, K_2} \sumprime_{ M\leq T^{\frac 12 +\varepsilon}\sqrt{\frac{N_2K_2}{N_1K_1}}}  \sum_{\substack{b_1,b_2 \\ (b_1, b_2) = 1}} \  \sum_{0<|\Delta|\leq \frac{MN_1K_1}{dT^{1 -\eps}}} \  \sum_{0< |h| < \frac{N_2K_2T^\eps}{dM}} \\
&\quad \frac{a_{db_1} \overline{a_{db_2}}F_{N_1K_1}(db_1)F_{N_2K_2}(db_2)}{db_1b_2}   \e{-\frac{h \overline b_1\Delta}{b_2}}\int_0^\infty \e{-\frac{hx}{b_1b_2}}\frac{F_{M}\pr{\frac{x}{b_1}}}{x} \times \\
& \quad\times \int_\R\e{\frac{\Delta t}{2\pi x}}W\pr{\frac{2\pi x^2}{b_1b_2t}}\phi\pr{\frac tT}\,dt\, dx.\\
}
After the change of variables $y = \frac{ t}{2\pi x}$ and then $z = \frac {x}{b_1b_2}$, it becomes
\est{
 {\mathcal A}'_{N,K} &= 2\pi\sum_{d \leq NK} \ \sumprime_{N_1, N_2, K_1, K_2} \sumprime_{ M\leq T^{\frac 12 +\varepsilon}\sqrt{\frac{N_2K_2}{N_1K_1}}}  \sum_{\substack{b_1,b_2 \\ (b_1, b_2) = 1}} \  \sum_{0<|\Delta|\leq \frac{MN_1K_1}{dT^{1 -\eps}}} \  \sum_{0< |h| < \frac{N_2K_2T^\eps}{dM}} \\
&\quad \frac{a_{db_1} \overline{a_{db_2}}F_{N_1K_1}(db_1)F_{N_2K_2}(db_2)}{d}  \e{-\frac{h \overline b_1\Delta}{b_2}}\int_0^\infty \e{-hz}F_{M}\pr{zb_2} \times \\
&\quad \times \int_\R\e{\Delta y}W\pr{\frac{z}{y}}\phi\pr{\frac {2\pi yzb_1b_2}{T}}\,dy\, dz,
} 

Firstly we claim that we can truncate the sum over $d$ at height $Y:= (N_1K_1N_2K_2)^{\frac 12}/T^{\frac12+\eta}$ for some small $\eta>0$, up to an error term with a power saving.  This is because for larger values of $d$ we are essentially left with the contribution coming from a Dirichlet polynomial of length $T^{\frac12+\eta}$, which we can bound using the method used to prove Theorem~\ref{thm:asymptotic}. More precisely, by~\eqref{eqn:hSmallbound}, we have that the contribution from the terms with $d\geq Y$ is bounded by
\es{\label{larged}
&\ll \sumprime_{N_1, N_2, K_1, K_2}\sum_{Y \leq d \leq NK} \bigg(\frac{T^{\frac{3}{20} + \eps}(N_1K_1 + N_2K_2)^{\frac {1}{4}}(N_1K_1N_2K_2)^{\frac{7}{10}}}{d^{ \frac{53}{20}}}  +\\
&\hskip 2in +T^\eps{}  \frac{(N_1K_1 + N_2K_2)^{\frac {1}{8}}(N_1K_1N_2K_2)^{\frac{7}{8}}}{d^{ \frac{23}{8}}}\bigg)\\
&\ll \sumprime_{N_1, N_2, K_1, K_2} \bigg(T^{\frac{39}{40} +\frac{33}{20}\eta+\eps}\pr{\frac{N_1K_1}{N_2K_2} +\frac{ N_2K_2}{N_1K_1}}^{\frac {1}{8}}  +T^{\frac{15}{16}+\frac{15}{8}\eta+\eps}  \pr{\frac{N_1K_1}{N_2K_2} +\frac{ N_2K_2}{N_1K_1}}^{\frac {1}{16}}\bigg)\\
&\ll T^{\frac{39}{40} +\frac{33}{20}\eta+\frac18\xi+\eps} +T^{\frac{15}{16}+\frac{15}{8}\eta+\frac1{16}\xi+\eps},
}
since $T^{-\xi}\ll \frac{N_1K_1}{N_2K_2} \ll T^{\xi}$.

For the remaining part of the proof we use  Watt's arguments in \cite{Watt}. We write 
\est{
a_{db_i}=\sum_{h_ik_i=db_i}\alpha_{h_i}\beta_{k_i}=\sum_{\substack{f_ig_i=b_i,\ \mu_i\nu_i=d,\\ (g_i,\frac d{\nu_i})=1}}\alpha_{f_i\mu_i}\beta_{g_i\nu_i}, 
}
so that $f_i \asymp N_i/\mu_i$ and $g_i \asymp K_i/\nu_i$. We will apply Proposition~\ref{Watt} to bound
\begin{align}\label{fpreW}
& \sumprime_{M\leq T^{ \frac 12 +\varepsilon}\sqrt{\frac{N_2K_2}{N_1K_1}}}\ \sum_{d \leq Y}\frac1d\sum_{\substack{\mu_1,\nu_1,\mu_2,\nu_2,\\\mu_1\nu_1=\mu_2\nu_2=d}}
\int_{z \asymp \frac{dM}{N_2K_2}}
\int_{  y\asymp \frac{dT}{MN_1K_1}} \left| W\pr{\frac{z}{y}}\times \right. \\
&  \times \sum_{\substack{g_1,g_2\\ \pr{g_i,\frac d{\nu_i}}=1}}\sum_{\substack{f_1,f_2 \\ (f_1g_1, f_2g_2) = 1}}  \sum_{0<|\Delta|\leq \frac{MN_1K_1}{dT^{1-\eps}}}  \sum_{0 < |h| < \frac{N_2K_2T^\eps}{dM}}\alpha_{f_1\mu_1}\beta_{g_1\nu_1}\overline {\alpha_{f_2\mu_2}\beta_{g_2\nu_2}} 
 \e{\frac{h\Delta \overline {f_1g_1}}{f_2g_2}}  \nonumber \\
 &\times \left. F_{N_1K_1}(f_1g_1\mu_1\nu_1)F_{N_2K_2}(f_2g_2\mu_2\nu_2)\e{-hz}\e{\Delta y}F_{M}\pr{zf_2g_2}\phi\pr{\frac {2\pi yzf_1f_2g_1g_2}{T}} \right|\,dy\, dz. \nonumber
\end{align}

Before using Proposition~\ref{Watt} (with $H = \frac{MN_1K_1}{dT^{1-\eps}}$, $C = \frac{N_2K_2T^\eps}{dM}, r = g_1, s = g_2, v = f_1, p = f_2$ in the proposition respectively), we verify that $X^2=\frac{N_1N_2K_1K_2}{HCd^2}=T^{1-\eps}\gg T^{\eps}$, which is clearly satisfied if $\eps$ is small enough, and that 
\est{
\pr{\frac{K_1K_2}{\nu_1\nu_2}}^2\geq \frac{N_1^2K_1^2N_2K_2M}{d^3T^{2-\eps}},\qquad \pr{\frac{K_1K_2}{\nu_1\nu_2}}^2\geq \frac{N_2K_2\mu_1}{N_1\mu_2\nu_2}T^{\eps} = \frac{K_2N_2\mu_1}{dN_1}T^{\eps} = \frac{K_2N_2}{\nu_1 N_1}T^{\eps}.
}
Since $M\leq T^{\frac12 + \eps} (N_2K_2/N_1K_1)^\frac12$, and $d \leq \frac{(N_1K_1N_2K_2)^{\frac 12}}{T^{\frac 12 + \eta}},$ the first condition is implied by $T^{2-\eps} \geq N_1^2N_2^2T^{\eps- \eta}$, which is true if $2\eps<\eta$. The second condition is equivalent to $K_1^2K_2\geq \nu_1\nu_2^2T^{\eps}\frac{N_2}{ N_1}$. This is true as long as $\eta>\tfrac16\xi_2+\tfrac43\eps$, since $N_i \leq T^{1/2 + \eps}$, $K_2/K_1\ll T^{\xi_2}$, and
$$ \nu_1\nu_2^2\frac{N_2}{ N_1}T^{\eps}\leq d^3\frac{N_2}{ N_1}T^\eps\leq \frac{N_1^{\frac 12}N_2^{\frac 52}K_1^{\frac 32}K_2^{\frac32} T^{\eps-3\eta}}{T^{3/2}}\leq K_1^2K_2\frac{K_2^{\frac12}}{K_1^{\frac12}} T^{4\eps-3\eta} \leq K_1^2K_2 T^{4\eps-3\eta+\frac{\xi_2}2}.$$

Applying Proposition~\ref{Watt} with $\delta^{-1}=\max(zC,yH)+1\ll T^\eps$ and using that $\mu_i,\nu_i\leq d\leq \frac{(N_1K_1N_2K_2)^{\frac 12}}{T^{\frac12+\eta}}$ , we obtain  that \eqref{fpreW} is bounded by 
\begin{align*}
&\sumprime_{M\leq T^{\frac{1}{2}+\varepsilon}\sqrt{\frac{N_2K_2}{N_1K_1}}}\sum_{d\leq Y}\frac{T^\eps}d   \sum_{\substack{\mu_1,\nu_1,\mu_2,\nu_2,\\\mu_1\nu_1=\mu_2\nu_2=d}}
 \frac{dM}{N_2K_2}
\frac{dT}{MN_1K_1}
\frac{N_1N_2K_1K_2}{Td^2}  \frac{K_1}{\nu_1}\pr{\frac{N_1}{\mu_1}+\frac{K_2}{\nu_2}T^\frac12}\times\\
&\quad\times\pr{1+\frac{N_1N_2\nu_1\nu_2}{Td^2}}^\frac12  \pr{1+\frac{dN_2}{\mu_2K_1N_1}  }^\frac12  \pr{1+\frac{MN_1N_2^2\nu_1^3\nu_2^2}{(K_1K_2)^2d T\mu_2^2}}^{  \frac14}\\
&\ll \sumprime_{M\leq T^{\frac{1}{2}+\varepsilon}\sqrt{\frac{N_2K_2}{N_1K_1}}}\sum_{d\leq Y}\frac{T^{\eps } }d   \sum_{\substack{\mu_1,\nu_1,\mu_2,\nu_2,\\\mu_1\nu_1=\mu_2\nu_2=d}}
\frac{K_1}{\nu_1}\pr{\frac{N_1}{\mu_1}+\frac{K_2}{\nu_2}T^\frac12}\pr{1+\frac{dN_2}{\mu_2K_1N_1}  }^\frac12\pr{1+\frac{MN_1N_2^2\nu_1^3\nu_2^2}{(K_1K_2)^2d T\mu_2^2}}^{  \frac14}\\
&\ll\sum_{d\leq Y}\frac{T^{\eps}}d   \sum_{\substack{\mu_1,\nu_1,\mu_2,\nu_2,\\\mu_1\nu_1=\mu_2\nu_2=d}}
  \left( \frac{K_1K_2T^\frac12}{\nu_1\nu_2}+
  \frac{K_1N_1}{d} + \frac{K_1^{\frac 38}N_1^{\frac 98}N_2^{\frac 58}}{\mu_1^{\frac 34}\mu_2K_2^{\frac 38}T^{\frac 18}} + \frac{K_1^{\frac 38}K_2^{\frac 58}T^{\frac 38}N_1^{\frac 18}N_2^{\frac 58}}{\nu_1^\frac14 d^{\frac 34}}+{} \right. \\
&\hskip 2in \left.+ \frac{K_1^{\frac 12}K_2T^{\frac 12}N_2^{\frac 12}}{\nu_1\nu_2^{\frac 12}N_1^{\frac 12}} + \frac{K_1^{\frac 12}N_1^{\frac 12}N_2^{\frac 12}}{d^{\frac 12}\mu_2^{\frac 12}} + \frac{N_1^{\frac 58}N_2^{\frac 98}d^{\frac 12}}{\mu_1^{\frac 34}\mu_2^{\frac 32}K_2^{\frac 38}K_1^{\frac 18}T^{\frac 18}} + \frac{T^{\frac 38}N_2^{\frac 98}K_2^{\frac 58}}{N_1^{\frac 38}K_1^{\frac 18}d^{\frac 14}\nu_1^{\frac 14}\mu_2^{\frac 12}}\right)\\
&\ll K_1K_2T^{\frac12+\eps} + K_1N_1T^{\eps} + \frac{K_1^{\frac 38}N_1^{\frac 98}N_2^{\frac 58}}{K_2^{\frac 38}T^{\frac 18-\eps}} + K_1^{\frac 38}K_2^{\frac 58}T^{\frac 38+\eps}N_1^{\frac 18}N_2^{\frac 58}+ \frac{K_1^{\frac 12}K_2T^{\frac 12+\eps}N_2^{\frac 12}}{N_1^{\frac 12}}+{} \\
& \hskip 0.2in + K_1^{\frac 12}N_1^{\frac 12}N_2^{\frac 12} T^\eps + \frac{N_1^{\frac 78}N_2^{\frac {11}8}K_1^{\frac 18}}{K_2^{\frac 18}T^{\frac 38 + \frac{\eta}{2}-\eps}} + \frac{T^{\frac 38+\eps}N_2^{\frac 98}K_2^{\frac 58}}{N_1^{\frac 38}K_1^{\frac 18}} .
\end{align*}
Summing over $K_i$ and $N_i$, we obtain that~\eqref{fpreW} is bounded by 
$$K^2 T^{\frac 12 + \eps} +  T^{\frac {3}{4} + \frac{3\xi_2}{8} + \eps} + K N^{\frac 34}T^{\frac 38 + \eps} + K^{\frac 32}T^{\frac 12 + \eps + \frac{\xi_1}{2}} + T^{\frac 34 + \frac{\xi_2}{8} -\frac{\eta}{2} + \eps} + K^{\frac 12}N^{\frac 34}T^{\frac 38 + \frac{3\xi_1}{8} + \frac{\xi_2}{8}}.$$
Theorem~\ref{thm:zetatimesproductoftwosmooth} then follows by taking $\eta=\frac{\xi_2}6 +3\eps$ and collecting the error term~\eqref{larged}.

\section{Proof of Corollary~\ref{cor:thirdMoment}}

The proof of Corollary~\ref{cor:thirdMoment} requires the following two lemmas.

\begin{lemma}\label{lemma:G}
Let $G$ be a compactly supported function. If $F = - G'$ for $x > 0$ and $F$ is three times continuously differentiable and compactly supported, then,
  \[ \sum_n \frac{G \left( \frac{\log n}{\log x} \right)}{n^s} = \frac{1}{2
     \pi i} \int_{(c)} \zeta (s + w) \cdot \widehat{F} \left( - \frac{iw \log x}{2
     \pi} \right)  \frac{\mathd w}{w} , \]
  for $c>\max(1-\Re(s),0)$ and $x>1$, where $\widehat F$ denotes the Fourier transform of $F$,
  \est{
  \widehat F(x):=\int_{-\infty}^\infty F(u)e^{-2\pi i ux}\,du.
  }
\end{lemma}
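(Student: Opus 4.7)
My plan is to combine the fundamental theorem of calculus with Perron's formula. Since $G$ is compactly supported and $F = -G'$, integrating from $y$ to $+\infty$ immediately gives $G(y) = \int_{y}^{\infty} F(u)\,du$. Substituting this into the left-hand side and swapping the (trivial) sum with the $u$-integral over the compact support of $F$, I would rewrite
\[
\sum_n \frac{G(\log n/\log x)}{n^s} = \int F(u)\sum_{n<x^u} \frac{1}{n^s}\,du,
\]
where $x>1$ lets me rewrite the condition $u>\log n/\log x$ as $n<x^u$.

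Next, I would apply Perron's formula to the inner sum: for non-integer $Y=x^u$ and $c > \max(1-\Re(s),0)$, one has
\[
\sum_{n<x^u} \frac{1}{n^s} = \frac{1}{2\pi i}\int_{(c)} \zeta(s+w)\,x^{uw}\,\frac{dw}{w}.
\]
Plugging this in and interchanging the $u$- and $w$-integrations produces
\[
\sum_n \frac{G(\log n/\log x)}{n^s} = \frac{1}{2\pi i}\int_{(c)} \zeta(s+w)\left(\int F(u)\,x^{uw}\,du\right)\frac{dw}{w}.
\]
The inner integral matches $\widehat F\!\left(-\tfrac{iw\log x}{2\pi}\right)$ after unwinding the Fourier convention $\widehat F(\xi)=\int F(u)e^{-2\pi i u\xi}\,du$ given in the statement, which yields the claimed identity.

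The one technical point is justifying the interchange of $u$- and $w$-integrations in the last step. Since $F\in C^3$ has compact support, $\widehat F(\xi)\ll (1+|\xi|)^{-3}$ on vertical strips of bounded real part, while $\zeta(s+w)/w$ grows only polynomially in $|\Im w|$ on $(c)$; hence the double integral converges absolutely and Fubini applies. The hypotheses $c>0$ and $c>1-\Re(s)$ are exactly those needed for Perron's formula to be valid and for the contour to remain to the right of both the pole of $\zeta(s+w)$ at $w=1-s$ and the pole of $1/w$ at $w=0$.
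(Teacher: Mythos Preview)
Your approach is essentially the same as the paper's, run in the opposite direction: the paper starts from the right-hand side, expands $\zeta(s+w)$ as a Dirichlet series, and shows term by term that the Perron--Mellin integral recovers $G(\log n/\log x)$, whereas you start from the left and invoke Perron for the partial sum $\sum_{n<x^u} n^{-s}$.

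There is, however, a real gap in your justification of the $u$--$w$ interchange. You assert that ``the double integral converges absolutely'' on the strength of the decay of $\widehat F$; but $\widehat F$ is the \emph{outcome} of the $u$-integration, not the integrand itself. Before integrating in $u$ the absolute value of the integrand is $|F(u)|\,|\zeta(s+w)|\,x^{uc}/|w|$. On the line $\Re(w)=c>1-\Re(s)$ the zeta factor is bounded, and integrating $|F(u)|x^{uc}$ over the compact support of $F$ leaves a constant times $\int_{(c)}|dw|/|w|$, which diverges. Hence Fubini does not apply as stated. The paper addresses precisely this point: it truncates the $w$-integral at height $X$, interchanges over the now-bounded region, evaluates the truncated Perron integral with an explicit error term, and only then lets $X\to\infty$, invoking the decay of $\widehat F$ at that final stage. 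The same truncation repairs your argument as well.
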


\begin{proof}First of all $\widehat{F}(x)$ is entire because
  $F$ is compactly supported. 
We expand the function $\zeta (s + w)$ into its Dirichlet series and compute
  \begin{equation} \label{first}
    \frac{1}{2 \pi \mathi} \int_{(c)} n^{- w} \cdot \widehat{F} \left( - \frac{iw
     \log x}{2 \pi} \right)  \frac{\mathd w}{w}. 
    \end{equation}
  Notice that
  $$
  \widehat{F}\bigg ( - \frac{i w \log x}{2\pi} \bigg ) 
  = \int_{-\infty}^{\infty} F(u) x^{u w} du.
  $$
  Inserting this representation into~\eqref{first} and inter-changing
  integrals, we obtain
  \begin{eqnarray*}
    &  & \int_{-\infty}^\infty F (u) \cdot \frac{1}{2 \pi i} \int_{(c)} \left(\frac{x^u}{ n}\right)^w \cdot
    \frac{\mathd w}{w} \mathd u= \int_{\frac{\log n}{\log x}}^{\infty} F (u) \mathd u = G
    \left( \frac{\log n}{\log x} \right).
  \end{eqnarray*}
  In order to justify the interchange of the two integrations
  we truncate the integral 
  in~\eqref{first} at
  a large height $X$, committing an error which goes to zero as $X\rightarrow\infty$ (since the Fourier transform $\widehat{F}$ will
  decay sufficiently fast), and interchange. Then, we use 
  a Perron formula with error term in order to compute
  the conditionally convergent Perron integral appearing
  above. Taking 
  the height $X \rightarrow \infty$ returns the desired result, as
  stated.
\end{proof}

\begin{lemma} \label{lem:from12tosigma} Let $A \geq 0$ and let $0<\eta<\frac1{66}$ be fixed constants. Let $v\in\R$ and $x < T^{1/2 + \eta}$. Let $s = \sigma + it$, where $\sigma = \frac{1}{2} + \frac{A}{\log T}$ and $T \leq t \leq 2T.$ Then 
$$ \int_{T}^{2T} |\zeta(s)|^2 \cdot \left | \sum_{n \leq x}
\frac{d_{1/2}(n)}{n^{s + i v}} \right |^2 dt \ll  T(\log T)^{9/4},$$
where $d_{1/2}(n)$ are the coefficients of the Dirichlet series expansion 
\est{
\zeta(s)^{\frac12}=\sum_{n\geq1}\frac{d_{1/2}(n)}{n^s},\qquad\Re(s)>1.
}
\end{lemma}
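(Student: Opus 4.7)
The plan is to derive this bound directly from Theorem~\ref{thm:asymptotic}. First I would absorb the twist $n^{-iv}$ and the shift $n^{-A/\log T}$ into the Dirichlet coefficients, setting
$$a_n := d_{1/2}(n) \cdot n^{-iv - A/\log T} \cdot \mathbf{1}_{n \leq x},$$
which still satisfies $|a_n| \leq d_{1/2}(n) \ll n^\eps$, so that $|D(\sigma+it)|^2 = |A(1/2+it)|^2$ with $A(s) := \sum_n a_n n^{-s}$. The remaining issue is that $\zeta$ is evaluated at $\sigma = 1/2 + A/\log T$ rather than on the critical line. Since $\sigma - 1/2 = O(1/\log T)$, the proof of Theorem~\ref{thm:asymptotic} carries over almost verbatim: applying Lemma~\ref{lem:fncofZetasq} at $\sigma$ introduces in each term an extra factor $(m_1 m_2)^{1/2 - \sigma}$, which is of order $\Theta(1)$ on the effective range $m_1 m_2 \ll T^{1+\eps}$ and may be absorbed into the smooth weight $W$; all subsequent steps (the diagonal/off-diagonal split, Poisson summation, and the trilinear sum estimate) are unaffected. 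Majorizing the indicator $\mathbf{1}_{[T,2T]}$ by a non-negative smooth $\phi$ supported in $[T/2, 3T]$ converts the lemma to an upper bound of the shape treated by Theorem~\ref{thm:asymptotic}.

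With these preparations the adjusted Theorem~\ref{thm:asymptotic} with $N = x < T^{1/2+\eta}$ bounds the integral by $|\mathcal M| + O(T^{3/20+\eps} x^{33/20} + T^{1/3+\eps})$, where
$$\mathcal M := \sum_{d, e \leq x} \frac{a_d \overline{a_e}}{[d,e]} \int_\R \bigl(\log \tfrac{t(d,e)^2}{2\pi de} + 2\gamma\bigr) \phi(t/T)\, dt.$$
For $\eta < 1/66$ the error is $O(T^{39/40 + 33\eta/20 + \eps}) = O(T^{1-\delta'})$ for some $\delta' > 0$, which is negligible. On the support of $\phi$ one has $t \asymp T$ and (since $d,e \leq x \leq T^{1/2+1/66}$) $\bigl|\log(t(d,e)^2/(2\pi de)) + 2\gamma\bigr| \ll \log T$, hence
$$|\mathcal M| \ll T \log T \cdot S_x, \qquad S_x := \sum_{d, e \leq x} \frac{d_{1/2}(d) d_{1/2}(e)}{[d,e]}.$$

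Finally, one bounds $S_x \ll (\log x)^{5/4}$. Writing $[d,e] = de/(d,e)$, the associated Dirichlet series $\sum_{d,e} d_{1/2}(d) d_{1/2}(e)(d,e)\, d^{-1-s_1} e^{-1-s_2}$ factors as an Euler product whose local factor at each prime $p$ equals $1 + \tfrac{5/4}{p} + O(p^{-2})$ (the coefficient $5/4 = 2\cdot 1\cdot \tfrac12 + (\tfrac12)^2$ arising from $(j,k) \in \{(0,1),(1,0),(1,1)\}$ in the local sum with weight $p^{-\max(j,k)}$). A standard Selberg--Delange / Wirsing-type Tauberian argument then gives $S_x \ll (\log x)^{5/4} \ll (\log T)^{5/4}$, and combining with the factor $T \log T$ yields $|\mathcal M| \ll T(\log T)^{9/4}$, completing the proof. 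The main obstacle is the bookkeeping required to verify that the proof of Theorem~\ref{thm:asymptotic} is insensitive to the shift from the critical line to $\Re(s) = \sigma$; the arithmetic sum estimate and the check that the error exponent is below $1$ are routine.
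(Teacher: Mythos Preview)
Your argument is correct and gives a genuinely different route from the paper's proof. The paper does not re-run the proof of Theorem~\ref{thm:asymptotic} at $\sigma = \tfrac12 + A/\log T$; instead it applies Gabriel's convexity theorem (Heath-Brown's Lemma~3 in \cite{HB}) to the holomorphic function $\Phi_{x,v}(s) f_t(s)$, with $\Phi_{x,v}(s) = \zeta(s)\sum_{n\leq x} d_{1/2}(n) n^{-s-iv}$ and $f_t(s) = \tfrac{s-1}{s-3} e^{(s-it)^2}$, interpolating between the lines $\Re s = \tfrac12$ and $\Re s = \tfrac52$. After integrating over $T\leq t\leq 2T$ and applying H\"older, this reduces the lemma to the bound at $\sigma = \tfrac12$ (where Theorem~\ref{thm:asymptotic} applies directly) together with the trivial estimate $O(T)$ at $\sigma = \tfrac52$. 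Both approaches ultimately need your estimate $|\mathcal M| \ll T(\log T)^{9/4}$ via $S_x \ll (\log x)^{5/4}$; the paper simply asserts the $O(T(\log T)^{9/4})$ bound on the critical line as a consequence of Theorem~\ref{thm:asymptotic} without spelling out this arithmetic computation, which you carry out explicitly and correctly.

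The trade-off: the paper's convexity argument is cleaner and uses Theorem~\ref{thm:asymptotic} as a black box, at the cost of invoking Gabriel's theorem. Your approach is more self-contained and elementary, but one should note that for $\sigma \neq \tfrac12$ the approximate functional equation for $|\zeta(\sigma+it)|^2$ has two slightly asymmetric pieces (a direct term with weight $(m_1 m_2)^{-\sigma}$ and a dual term with weight $|\chi(\sigma+it)|^2 (m_1 m_2)^{\sigma-1}$) rather than the single doubled term of Lemma~\ref{lem:fncofZetasq}; since $|\chi(\sigma+it)|^2 \asymp t^{1-2\sigma} = e^{-2A+o(1)}$ and both weight factors are $\asymp 1$ and smooth on the effective range $m_1 m_2 \ll T^{1+\eps}$, this is harmless, as you indicate.
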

\begin{proof}
Let 
$$
\Phi_{x,v}(s) := \zeta(s) \cdot \sum_{n \leq x} \frac{d_{1/2}(n)}{n^{s + iv}}
$$
and
$$
f_{t}(s) =  \frac{s - 1}{s - 3} \exp \big ( (s - i t )^2 \big ).
$$
Then, by Gabriel's convexity theorem (see \cite{HB}, Lemma 3)
\begin{align*}
\int_{\mathbb{R}} |\Phi_{x,v}(\sigma + i u) f_t(\sigma + i u)|^2 d u
\leq \bigg ( \int_{\mathbb{R}} |\Phi_{x,v}(\tfrac 12 + iu) f_t(\tfrac 12 + iu)|^2 du \bigg )^{\frac{5/2 - \sigma}{2}}
\\
\times \bigg ( \int_{\mathbb{R}} |\Phi_{x,v}(\tfrac 52 + iu) f_t(\tfrac 52 + iu)|^2 du \bigg )^{\frac{\sigma - 1/2}{2}}
\end{align*}
We now integrate both sides over $T \leq t \leq 2T$ and use H\"older's inequality
to get 
\begin{align*}
\int_{\mathbb{R}} |\Phi_{x,v}(\sigma + i u)|^2 \widetilde{f}_T(\sigma + i u) d u
\leq \bigg ( \int_{\mathbb{R}} |\Phi_{x,v}(\tfrac 12 + iu)|^2 \widetilde{f}_T(\tfrac 12 + iu) du \bigg )^{\frac{5/2 - \sigma}{2}}
\\
\times \bigg ( \int_{\mathbb{R}} |\Phi_{x,v}(\tfrac 52 + iu)|^2 \widetilde{f}_T(\tfrac 52 + iu) du \bigg )^{\frac{\sigma - 1/2}{2}}
\end{align*}
where
$$
\widetilde{f}_T (\sigma + i u) := \int_{T}^{2T} |f_t(\sigma + i u)|^2 dt.
$$
Clearly $\widetilde{f}_T(\sigma + i u) \asymp 1$ if $T \leq u \leq 2T$. 
In addition $$
\widetilde{f}_T(\sigma + iu) \ll \begin{cases}
1 & \text{ if } T/2 \leq u \leq 3T \\
e^{-|u|} & \text{ otherwise}.
\end{cases}
$$
We also note that $\Phi_{x,v}(s) \ll (1 + |s|)^{1/4 + \varepsilon} \cdot \sqrt{T}$.
Therefore the previous inequality becomes
\est{
\int_{T}^{2T} |\Phi_{x,v}(\sigma + it)|^2 dt
&\ll \bigg ( \int_{T/2}^{3T} |\Phi_{x,v}(\tfrac 12 + it)|^2 dt+O(T) \bigg )^{\frac{5/2 - \sigma}{2}} \\
&\quad \times \bigg ( 
\int_{T/2}^{3T} |\Phi_{x,v}(5/2 + it)|^2 dt +O(T)\bigg )^{\frac{\sigma - 1/2}{2}}. 
}
According to Theorem 1 the first integral on the right-hand side is 
$O\pr{ T (\log T)^{9/4}}$ while the second integral on the
right hand side is $O( T)$. 

\end{proof}

Let $\delta > 0$ be a small positive real number to be chosen later. 
We pick a parameter $\theta$ close to $1$, with $\delta<\theta<1$, and define
\[ \widehat{F} (z) = e^{2 \pi i (\theta - \delta) z} \cdot \left( \frac{e^{2 \pi i (1 -
   \theta) z} - 1}{2 \pi i (1 - \theta) z} \right)^N \]
with some bounded $N > 10$. 
We see that $F$ is compactly supported on 
$[\theta - \delta,\theta - \delta + (1 - \theta)N]$. 
Define for $x > 0$, 
$$
G(x) = 1 - \int_{0}^{x} F(u) du,
$$
and $G(x)=0$ for $x\leq -1$. Moreover, we let $G(x)$ decay smoothly until $0$ on the interval $[-1, 0]$.   
This way $F = -G'$ for $ x > 0$. We notice that $G(x) = 1$ for $0 < x < \theta - \delta$
and that $G(x) = 1 - \widehat{F}(0) = 0$ for $x > \theta - \delta + (1 - \theta) N$. Finally we notice that $G$ is $N$ times differentiable, and consequently
that $\widehat{G}(x) \ll (1 + |x|)^{-N}$. 

Now we make a choice for $\theta$ and $\delta$.
Let $\theta = \log y / \log x$ with
$y = T^{1/2 + 2\delta}$ and $x$ chosen so that $\theta - \delta
 + ( 1 - \theta) N < 1$. We pick $1 - \theta = (\delta / 2)/ (N - 1)$ so 
that $x = y^{1/(1 - (\delta/2) / (N-1))}$. 
Then, we choose $\delta$
small enough but positive so as to ensure that $x < T^{1/2 + 0.01}$. 

Note that
$$
\widehat{F}\bigg ( - \frac{i w \log x}{2\pi} \bigg ) 
= (y x^{-\delta})^w \cdot \bigg ( \frac{(x/y)^w - 1}{w (1 - \theta) \log x} \bigg )^{N} 
$$
Let $s = \sigma + it$ with $t\asymp T$ and $\sigma = \tfrac 12+\frac{A}{\log T}$, with $A>0$. 
Using Lemma~\ref{lemma:G} and shifting contours to $\Re(w) = \tfrac 12 - \sigma$ we get
\begin{eqnarray*}
  \zeta (s) & = & \sum_n \frac{G \left( \frac{\log n}{\log x} \right)}{n^s} +
  \kappa (4N)^N \frac{(y x^{-\delta})^{1 / 2 - \sigma}}{(\delta \log x)^N} \int_{- \infty}^{\infty}
  \frac{\left| \zeta \left( \tfrac{1}{2} + \mathi t + \mathi v \right) \right|
  \mathd v}{\big( \big( \sigma - \tfrac{1}{2} \big)^2 + v^2 \big)^{(N+1) /
  2}} + O(T^{-1})
\end{eqnarray*}
where $O(1/T)$ is the contribution from the pole at $w = 1 - s$
and with $|\kappa| \leq 1$. 
Let $c(m) = \sum_{m = f e, f , e \leq x} d_{1/2}(e)d_{1/2}(f)$. Importantly, notice that
$c(m) = 1$ for $m \leq x$. Since in addition $G(v) = 0$ for $v > 1$ we
get
\begin{align*}
\sum_{n} \frac{G \left ( \frac{\log n}{\log x} \right )}{n^s}
&=\sum_{n} c(n)\frac{G \left ( \frac{\log n}{\log x} \right )}{n^s}\\
& = \frac{\log x}{2\pi} \int_{\mathbbm{R}} \sum_{n} \frac{c(n)}{n^{s + iv}} \cdot
\widehat{G} \bigg (\frac{v \log x}{2\pi} \bigg ) dv \\
& = \frac{\log x}{2\pi} \int_{\mathbbm{R}} \bigg ( \sum_{n \leq x} \frac{d_{1/2}(n)}{n^{s + i v}}
\bigg )^2 \widehat{G} \bigg (\frac{v \log x}{2\pi} \bigg ) d v. \\
\end{align*}
Combining the above two equations, we have obtained the following inequality
\begin{align*}
|\zeta(s)| & \leq \log x
\int_{\mathbbm{R}} \bigg | \sum_{n \leq x} \frac{d_{1/2}(n)}{n^{s + iv}}
\bigg |^2 \cdot \left | \widehat{G} \bigg (\frac{v \log x}{2\pi} \bigg ) \right | dv
\\ & + (4N)^N \frac{(y x^{-\delta})^{1/2 - \sigma}}{(\delta \log x)^N} \int_{-\infty}^{\infty}
\frac{|\zeta(\tfrac 12 + i t + i v)| d v}{\big ( \big ( \sigma - \tfrac 12 \big )^2 + v^2 \big )^{(N+1)/2} }
+ O(1/T).
\end{align*}
 Therefore we have obtained
\[ \int_T^{2 T} | \zeta (s) |^3 \mathd t \leqslant 
\log x \int_{\mathbbm{R}} \left | \widehat{G} \pr{ \frac{v \log x}{2\pi} } \right | 
 \int_{T}^{2T} |\zeta(s)|^2 \cdot \left | \sum_{n \leq x}
\frac{d_{1/2}(n)}{n^{s + i v}} \right |^2 dt dv
+ \mathcal{E}+O(T^\eps), \]
where
\begin{align} \label{something}
  \mathcal{E} & \leqslant & (4N)^N \cdot \frac{(y x^{-\delta})^{1 / 2 - \sigma}}{(\delta \log x)^N}
  \int_{- \infty}^{\infty} \bigg ( \int_T^{2 T} | \zeta (s) |^2 \cdot \left|
  \zeta \big ( \tfrac{1}{2} + \mathi t + \mathi v \big ) \right| \mathd t
  \bigg ) \cdot \frac{\mathd v}{\big( \big( \sigma - \tfrac{1}{2} \big)^2
  + v^2 \big)^{(N+1) / 2}}.
\end{align}
By H\"older's inequality and the bound $|\zeta(\tfrac 12 + it)| \ll (1 + |t|)^{1/6 + \varepsilon}$, for $|v| < T^{1/100}$ we have
\est{
  \int_T^{2 T} | \zeta (s) |^2 \left| \zeta \left( \tfrac{1}{2} + \mathi t +
  \mathi v \right) \right| \mathd t & \leqslant  \left( \int_T^{2 T} | \zeta
  (s) |^3 \mathd t \right)^{2 / 3} \left( \int_T^{2 T} | \zeta (1 / 2 + \mathi
  t + \mathi v) |^3 \> dt \right)^{1 / 3}\\
  & \leqslant  M_3 (\sigma, T)^{2 / 3} \cdot \pr{M_3 \left(
  \tfrac{1}{2}, T \right)+ O(|v| T^{1/2 + \varepsilon})}^{1 / 3},
}
where
\[ M_3 (\sigma, T) \assign  \int_T^{2 T} | \zeta (\sigma +
   \mathi t ) |^3 \mathd t. \]
By a minor modification of Lemma 4 in Heath-Brown's paper \cite{HB} we have
\[ M_3 \left( \tfrac{1}{2}, T \right) \leqslant C \cdot T^{(3 / 2)
   (\sigma - 1 / 2)}  M_3 (\sigma, T). \]
Therefore,
\begin{align*}
\int_{T}^{2T} |\zeta(s)|^2 |\zeta(\tfrac 12 + it + iv)| \> \new{dt}
& \leq M_3(\sigma, T)^{2/3}
\cdot \pr{CT^{(\new{3/2})(\sigma - \tfrac 12)} M_3(\sigma, T )+ O(|v| T^{1/2 + \varepsilon})}^{1/3}  \\
& \leq CT^{(1/2)(\sigma - \tfrac 12)} \cdot \pr{M_3(\sigma,T)
+ O( T^{1- \varepsilon})}.
\end{align*}
The contribution of $|v| > T^{1/100}$ to~\eqref{something} is negligible, provided that $N$
is chosen to be large enough. 
We conclude that
\[ \mathcal{E} \leqslant \frac{C (4N)^N}{\delta^N}  \left( \frac{T^{1 / 2} x^{\delta}}{y}
   \right)^{\sigma - \tfrac 12} \cdot \frac{M_3 (\sigma, T)}{(\log x (\sigma
- \tfrac 12))^N} +O_{A}(T^{1 - \varepsilon}). \]
Using Lemma~\ref{lem:from12tosigma}, we find that
$$
\log x \int_{\mathbbm{R}} \left | \widehat{G} \bigg ( \frac{v \log x}{2\pi}\bigg ) \right | 
 \int_{T}^{2T} |\zeta(s)|^2 \cdot \left | \sum_{n \leq x}
\frac{d_{1/2}(n)}{n^{s + i v}} \right |^2 dt dv
\ll T ( \log T)^{9/4}. $$
We have obtained the inequality
\[ M_3 (\sigma, T) \ll T (\log T)^{9 / 4}  +O_{A}(T^{1 - \varepsilon})+ \frac{C (4N)^N}{\delta^N}
   \cdot \left( \frac{T^{1 / 2} x^{\delta}}{y} \right)^{\sigma - \tfrac 12}
   \cdot \frac{M_3(\sigma, T)}{((\sigma - \tfrac 12) \log x)^{N}}.
  \]
Recall that $y = T^{1 / 2 + 2\delta}<x < T$. Since $\sigma = \tfrac{1}{2} + \frac{A}{\log T}$, the third term on the right-hand side in the above equation is less than
$$
\leq C (8N/A)^N \delta^{-N} e^{-\delta A} M_3(\sigma,T) 
$$
with $C$ an absolute constant. Thus, if $A$ is large enough (but bounded) then the third term on the right-hand side in the
above equation is absorbed into the left-hand side, and we conclude that
\[ M_3 (\sigma, T) \ll T (\log T)^{9 / 4}. \]
Since $M_3(\tfrac 12 , T) \ll T^{(3/2) \cdot (\sigma - \tfrac 12)} M_3(\sigma, T)$ by
Lemma 4 of Heath-Brown \cite{HB} and since $\sigma = \tfrac 12 + \frac{A}{\log T}$, we obtain that
$$
M_3(\tfrac 12, T) \ll T (\log T)^{9/4}.
$$

\subsection{Moments of the form $k = 1 + 1/n$} \label{sec:overnMoment}
Since we do not claim the result for moments with $k = 1 + 1/n$  
we only sketch the necessary modifications of the previous argument, for the
convenience of the interested reader. 
In order to adapt our  argument above to moments of the form $1 + 1 / n$, it suffices
to prove the inequality
\begin{align*} | \zeta (s) |^{2 / n} & \ll \log T \int_{\sigma_-}^{\sigma_+}
   \int_{\mathbbm{R}} \pmd{ \hat{G} \pr{\frac{v}{2\pi} } }^{2 / n} \cdot \left| \sum_{m \leqslant
   x} \frac{d_{1 / n} (m)}{m^{s + \sigma + \mathi v / \log x}} \right|^2
   \mathd v \mathd \sigma \\ & +
   \frac{(yx^{- \delta})^{2 (1 / 2 - \sigma) / n}}{(\delta \log x)^{2 N / n}}
   \cdot \log T \int_{\sigma_-}^{\sigma_+} \int_{- \infty}^{\infty} \frac{|
   \zeta (\sigma + \mathi t + \mathi v) |^{2 / n} \cdot \mathd v \mathd
   \sigma}{\left( \left( \sigma - \tfrac{1}{2} \right)^2 + v^2 \right)^{(N +
   1) / n}}, \end{align*}
where $\sigma_{-} = \tfrac 12 - \tfrac{1}{\log T}$, $\sigma_{+} = \tfrac 12 + \tfrac{1}{ \log T}$ and
with the implicit constant depending at most on $n, N$ and the same choice of
parameters $\theta, x, y, \delta$. This is sufficient because the previous argument does not depend on some specific quantification of the dependence on $N$. First we note that
\[ \sum_n \frac{G \left( \frac{\log n}{\log x} \right)}{n^s} =
\frac{1}{2\pi}   \int_{\mathbbm{R}} \left( \sum_{m \leqslant x} \frac{d_{1 / n} (m)}{m^{s +
   \mathi v / \log x}} \right)^n \cdot \hat{G} \pr{\frac{v}{2\pi} } \mathd v. \]
Combining this with Lemma~\ref{lemma:G}, and using the same choice of parameters $\theta,
x, y, \delta$ as before, it follows that
\begin{align*} | \zeta (s) | & \ll \int_{\mathbbm{R}} \pmd{ \hat{G} \pr{\frac{v}{2\pi}  } } \cdot \left|
   \sum_{m \leqslant x} \frac{d_{1 / n} (m)}{m^{s + \sigma + \mathi v / \log
   x}} \right|^n \mathd v + 
   \frac{(yx^{- \delta})^{1 / 2 - \sigma}}{(\delta \log x)^N}
   \int_{\mathbbm{R}} \frac{\left| \zeta \left( \tfrac{1}{2} + \mathi
   t + \mathi v \right) \right| \mathd v}{\left( \left( \sigma - \tfrac{1}{2}
   \right)^2 + v^2 \right)^{(N + 1) / 2}} .
\end{align*}
Taking the $2 / n$ power on both sides, it remains to show that
\[ \left( \int_{\mathbbm{R}} \pmd{ \hat{G} \pr{\frac{v}{2\pi} }} \cdot \left| \sum_{m \leqslant x}
   \frac{d_{1 / n} (m)}{m^{s + \mathi v / \log x}} \right|^n \mathd v
   \right)^{\frac2n} \ll \log T \int_{\sigma_-}^{\sigma_+}
   \int_{\mathbbm{R}} \pmd{ \hat{G} \pr{\frac{v}{2\pi} }}^{\frac2n} \cdot \left| \sum_{m \leqslant
   x} \frac{d_{1 / n} (m)}{m^{s + \sigma + \mathi v / \log x}} \right|^2
   \mathd v \mathd \sigma, \]
and that
\begin{equation} \label{another2}
\bigg ( \int_{-\infty}^{\infty}
\frac{|\zeta(\tfrac 12 + i t + i v)| dv}{\big ( \big ( \sigma - \tfrac 12 \big )^2
+ v^2 \big )^{(N+1)/2}} \bigg )^{2/n} \ll \log T 
\int_{\sigma_{-}}^{\sigma_{+}} \int_{-\infty}^{\infty} 
\frac{|\zeta(\sigma + i t + i v)|^{2/n} dv}{\big ( \big ( \sigma - \tfrac 12 \big )^2 + v^2 \big )^{(N+1)/n}}  d \sigma.
\end{equation}
We will only show how to prove the second inequality since the proof of the
first is very similar.
We bound the integral
$$
\int_{-\infty}^{\infty}
\frac{|\zeta(\tfrac 12 + i t + i v)| dv}{\big ( \big ( \sigma - \tfrac 12 \big )^2
+ v^2 \big )^{(N+1)/2}} 
\leq 
(\sigma - \tfrac 12)^{- N - 1}
\sum_{k} \frac{M_k}{(1 + |k|)^{N + 1}},
$$
where $M_k$ is the maximum of $\zeta(\tfrac 12 + it + iv)$ over
the interval $|v - k (\sigma - \tfrac 12)| < (\sigma - \tfrac 12)/2$.
Therefore
\begin{equation} \label{another}
\bigg ( \int_{-\infty}^{\infty}
\frac{|\zeta(\tfrac 12 + i t + i v)| dv}{\big ( \big ( \sigma - \tfrac 12 \big )^2
+ v^2 \big )^{(N+1)/2}} \bigg )^{2/n}
\leq (\sigma - \tfrac 12)^{-2(N + 1)/n}
\sum_{k} \frac{M_k^{2/n}}{(1 + |k|)^{2(N+1)/n}}.
\end{equation}
By sub-harmonicity,
$$
M_{k}^{2/n} \ll \log T \int_{\sigma_{-}}^{\sigma_{+}}
\int_{(k-1) (\sigma - \tfrac 12) }^{(k + 1) (\sigma - \tfrac 12)}
|\zeta(\sigma + it + ix)|^{2/n} \>dx \>d \sigma.
$$
We conclude that
\begin{align*}
 & (\sigma - \tfrac 12)^{-2(N + 1)/n}
\sum_{k} \frac{M_k^{2/n}}{(1 + |k|)^{2(N+1)/n}}
\\ & \ll \log T \int_{\sigma_{-}}^{\sigma_{+}} 
\sum_{k} \int_{(k -1)(\sigma - \tfrac 12)}^{(k+1)(\sigma - \tfrac 12)}
\frac{|\zeta(\sigma + it + i x)|^{2/n} \> dx \> d \sigma}{((\sigma - \tfrac 12)^2 + (k 
(\sigma - \tfrac 12))^2)^{2(N+1)/n}}
\\ &
\ll   \log T 
\int_{\sigma_{-}}^{\sigma_{+}} \int_{-\infty}^{\infty} 
\frac{|\zeta(\sigma + i t + i v)|^{2/n} dv}{\big ( \big ( \sigma - \tfrac 12 \big )^2 + v^2 \big )^{(N+1)/n}}  \> d \sigma. 
\end{align*}
Combining these equations together, we obtain the desired inequality~\eqref{another2}.

\appendix
\section{On Conjecture~\ref{mconj}}
\begin{prop} Let $A,M,N\geq1$ and let $A\ll (MN)^{\frac12+\eps}$. Then
\est{
\max_{\alpha,\beta,\nu} |S_{A,M,N}|  \gg (AMN)^{\frac12-\eps} (M+N)^{\frac 12} +A(M+N)^{1-\eps},
}
for all $\eps>0$, where the maximum is taken over all choices of coefficients $\alpha_m,\beta_n,\nu_a\ll 1$. 
\end{prop}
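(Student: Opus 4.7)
The plan is to establish both bounds separately by explicit constructions. The Farey reciprocity $\overline m/n+\overline n/m\equiv 1/(mn)\pmod 1$, which relates $S_{A,M,N}$ to $S_{A,N,M}$ with an additive error of size $O(A^2)$ (coming from $|\e{a/(mn)}-1|\ll A/(MN)$), lets me assume without loss of generality that $M\geq N$; the target then reduces to $\gg M\sqrt{AN}+AM$.

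For the bound $\gg M\sqrt{AN}$ (i.e.\ the first term), let $\beta_n$ and $\nu_a$ be independent Rademacher $\pm 1$ random variables, and for each realization set $\alpha_m:=\overline{F_m}/|F_m|$ where
\[
F_m:=\sum_{a\sim A}\sum_{\substack{n\sim N\\ (m,n)=1}}\nu_a\beta_n\, \e{\tfrac{a\overline m}{n}}.
\]
Then $|S|=\sum_m|F_m|$ for this choice of $\alpha$. A direct computation of the second moment gives $\mathbb{E}|F_m|^2 = A\cdot\#\{n\sim N:(n,m)=1\}\asymp AN$ for the majority of $m\sim M$ (using $\varphi(m)/m\asymp 1$ on average). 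Applying Khintchine's inequality to the double random sum yields $\mathbb{E}|F_m|\gg\sqrt{AN}$, so that $\mathbb{E}\sum_m|F_m|\gg M\sqrt{AN}$ and some realization $(\beta,\nu)$ achieves $|S|\gg M\sqrt{AN}$.

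For the bound $\gg AM$ (i.e.\ the second term), observe first that if $A\leq N$ then $M\sqrt{AN}\geq AM$, so the first-term bound already suffices. When $A>N$, take $\alpha_m=\beta_n=\nu_a=1$: the triples $(a,m,n)$ with $n\mid a$ satisfy $\e{a\overline m/n}=1$ and contribute the positive real quantity $\sum_{n\sim N}(A/n)\cdot M\varphi(n)/n\asymp AM$, while the remaining terms with $n\nmid a$ are shown, after completing the inner $m$-sum via Weil's bound on Kloosterman sums and using the Ramanujan-sum identity $\sum_{a\in[A,2A)}c_n(a)=O(\sigma(n))$ for $n\geq 2$, to contribute at most $O(MN^{\eps}+AN^{3/2+\eps})$, which is subdominant for $A>N$ and $M\gg N^{3/2}$; the intermediate ranges are handled by the reciprocity swap.

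\textbf{Main obstacle.} The chief delicate point is controlling the error in the second construction across the intermediate regime $M\asymp N^{3/2}$, where the completion error rivals the main $AM$ term; the Farey reciprocity swap is the essential tool here, and even with it one must track an additional $O(A^2)$ error arising from the swap itself. The Khintchine step in the first construction is standard once one verifies that $\mathbb E|F_m|^2\asymp AN$ is uniform over generic $m$.
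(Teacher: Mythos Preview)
Your first-term construction is sound and, in spirit, the same as the paper's: the paper averages over Dirichlet characters modulo a large prime $q$ and uses the H\"older inequality $\sum_m F_m^2\leq(\sum_m F_m)^{2/3}(\sum_m F_m^4)^{1/3}$, together with explicit second- and fourth-moment computations, to extract $\max|S|\gg M(AN)^{1/2-\eps}$. Your Rademacher/Khintchine variant does the same job; just be aware that the relevant inequality is for degree-$2$ Rademacher chaos (hypercontractivity or a fourth-moment bound), not the linear Khintchine inequality.

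The second-term construction, however, has a genuine gap. With $\alpha_m=\beta_n=\nu_a=1$ the sum is in fact small: completing the $m$-sum for every pair $(a,n)$ gives
\[
\sum_{m\sim M,\,(m,n)=1}\e{a\overline m/n}=\frac{M}{n}c_n(a)+O(n^{1/2+\eps}),
\]
and summing over all $a\in[A,2A)$ one has $\sum_a c_n(a)=O(\sigma(n))$ for $n\geq 2$, so $S_{A,M,N}\ll MN^{1+\eps}+AN^{3/2+\eps}$. In particular, whenever $A\gg N^{1+\eps}$ (a regime allowed by $A\ll(MN)^{1/2+\eps}$) this contradicts $|S|\gg AM$. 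The mistake is in applying the identity $\sum_{a\in[A,2A)}c_n(a)=O(\sigma(n))$ to the \emph{restricted} sum over $a$ with $n\nmid a$: in fact
\[
\sum_{\substack{a\in[A,2A)\\ n\nmid a}}c_n(a)=\sum_{a\in[A,2A)}c_n(a)-\sum_{\substack{a\in[A,2A)\\ n\mid a}}\varphi(n)=-\frac{A\varphi(n)}{n}+O(\sigma(n)),
\]
and the resulting $-AM\sum_n\varphi(n)/n^2$ exactly cancels your ``main term'' from $n\mid a$. The reciprocity swap cannot rescue this, since after swapping you are in the regime $M\leq N$ with the same obstruction.

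The paper's fix is a genuinely different construction: take $\alpha_m=f(m)$ smooth (so Poisson in $m$ yields the Ramanujan sum $c_n(a)$ with negligible error), and take $\beta_n$ and $\nu_a$ supported on primes lying in \emph{distinct} residue classes modulo $4$. This forces $(n,a)=1$ always, hence $c_n(a)=\mu(n)=-1$ for every surviving pair, and (absorbing the sign into $\beta$) one obtains $S\gg \frac{M}{N}\cdot\#\{n\}\cdot\#\{a\}\gg (MA)^{1-\eps}$.
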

\begin{proof}
By the reciprocity relation  $\frac {\overline m}n\equiv-\frac {\overline n}m+\frac1{mn}\mod1$ we can assume $M\geq N$. Moreover, we can assume $N,A\gg M^\eps$ for some small $\eps>0$ and $M$ arbitrary large, since otherwise the result is easy.

First, we consider the case $M^{1-\delta}\gg N$ for some $\delta>0$ and we take $\alpha_m=f(m)$ for some smooth function $f:[M,2M]\rightarrow[0,1]$ which is such that $f^{(j)}(x)\ll_j x^{-j}$ for all $j\geq0$ and $\int_\R f(x)=KM,$ for some $K>0$. Also, let $\beta_n=-\gamma_n$, where $\gamma_n$ is the indicator function of the primes congruent to $1\mod 4$ in $[N,2N]$, and let $\nu_a$ be the indicator function of the primes congruent to $3\mod 4$ in $[A,2A]$.

By Poisson summation, we have
\est{
\sum_mf(m)\e{\frac {a\overline m}n}=K\frac MN(c_n(a)+O(M^{-100})),
}
where 
\est{
c_n(a)=\sum_{\substack{b=1,\\(b,n)=1}}^n\e{\frac{ba}{n}}=\mu\pr{\frac{n}{(n,a)}}\frac{\varphi(n)}{\varphi\pr{\frac{n}{(n,a)}}}
} 
is the Ramanujan sum. It follows that
\est{
S_{A,M,N}&=K\frac MN\sum_a\sum_n\beta_n\nu_a (c_n(a)+O(M^{-100}))\\
&=K\frac MN\sum_a\sum_n \gamma_n\nu_a(1 +O(M^{-100}))\gg {(MA)^{1-\eps}}.\\
}

We now prove 
\est{
\max_{\alpha,\beta,\nu} |S_{A,M,N}|  \gg M(AN)^{\frac12-\eps},
}
which then implies the Proposition even in the case $M^{1-\delta}\ll N$ for all $\delta>0$.

By choosing $\alpha_m$ appropriately, we have
\est{
\max_{\alpha,\beta,\nu}|S_{A,M,N}|  \gg \max_{\beta,\nu}\sum_m  F_{m;\beta,\nu},
}
where
\est{
F_{m;\beta,\nu}:=\pmd{ \sum_a  \sum_{(n,m) = 1} \nu_a\beta_n \e{\frac{a\overline{m}}{n}}}.
}
First, notice that we have
\est{
\max_{\beta,\nu} \sum_{m} F_{m;\beta, \nu} \geq \frac{1}{\varphi(q)^2} \sum_{\substack{\chi_1, \chi_2 \mod{q}}}
\sum_m F_{m;\beta(\chi_1),\nu(\chi_2)}
}
with $q$ any prime greater than  $4(A^4 + N^4)$ and where $\beta(\chi_1), \nu(\chi_2)$ denotes sequences defined by $\beta(\chi_1)_n = \chi_1(n)$ and
$\nu(\chi_2)_a = \chi_2(a)$ respectively. 
 Moreover, by H\"older's inequality,
\begin{align*}
\frac{1}{\varphi(q)^2} & \sum_{\substack{\chi_1, \chi_2 \mod{q}}} 
\sum_m F_{m;\beta(\chi_1),\nu(\chi_2)}^2 \leq\Bigg(\frac{1}{\varphi(q)^2} \sum_{\substack{\chi_1,\chi_2 \mod{q}}} \sum_m F_{m;\beta(\chi_1),\nu(\chi_2)} \Bigg)^{\frac23} \times \\
& \times \Bigg(\frac{1}{\varphi(q)^2} \sum_{\substack{\chi_1, \chi_2 \mod{q}}} \sum_m F_{m;\beta(\chi_1),\nu(\chi_2)}^4 \Bigg)^{\frac13}.
\end{align*}
The left hand side is
\est{
\frac{1}{\varphi(q)^2} \sum_{\substack{\chi_1, \chi_2 \mod{q}}} \sum_m F_{m;\beta(\chi_1),\nu(\chi_2)}^2
=\sum_m\sum_{a}\sum_{(n,m)=1}1\gg MAN,
}
and we also have
\est{
\frac{1}{\varphi(q)^2} \sum_{\substack{\chi_1, \chi_2 \mod{q}}} \sum_m F_{m;\beta(\chi_1),\nu(\chi_2)}^4  = \sum_m\sum_{\substack{a_1a_2=a_3a_4}}\sum_{\substack{n_1n_2=n_3n_4,\\ (m,n_1n_2)=1}}1\ll M(AN)^{2+\eps}.
}
Thus,
\est{
\frac{1}{\varphi(q)^2} \sum_{\substack{\chi_1, \chi_2 \mod{q}}} 
F_{m;\beta(\chi_1),\nu(\chi_2)}  \gg M(AN)^{\frac12-\eps}
}
and the proposition follows.
\end{proof}

\end{document}